\title{Generators of Residual Intersections}
\author{Yevgeniya Tarasova}
\date{}
\DeclareMathOperator{\depth}{depth}
\DeclareMathOperator{\height}{ht}
\DeclareMathOperator{\Fitt}{Fitt}
\DeclareMathOperator{\ann}{ann}
\DeclareMathOperator{\grade}{grade}
\DeclareMathOperator{\Spec}{Spec}
\DeclareMathOperator{\Ext}{Ext}
\DeclareMathOperator{\Kitt}{Kitt}
\newtheorem{thm}{Theorem}[section]
\newtheorem{defn}[thm]{Definition}
\newtheorem{lem}[thm]{Lemma}
\newtheorem{cor}[thm]{Corollary}
\newtheorem{prop}[thm]{Proposition}
\newtheorem{rem}[thm]{Remark}
\begin{document}
\begin{abstract}
    Under suitable technical assumptions, a description is given for the generators of $s$-residual intersections of an ideal $I$ in terms of lower residual intersections, if $s \geq \mu(I)-2$. This implies that $s$-residual intersections can be expressed in terms of links, if $\mu(I) \leq \height(I)+3$ and some other hypotheses are satisfied.
\end{abstract}
\maketitle
\section{Introduction}
In this paper, we give formulas for the generators of residual intersections under suitable assumptions. Let $(R,m)$ be a Noetherian local ring, and let $I$ be a proper $R$-ideal. The ideal $J$ is an $s$-\textit{residual intersection} of $I$ when $J = \mathfrak{a}:I$, where $\mathfrak{a} = (a_1,\dots,a_s) \subsetneq I$ and $\height J \geq s$. The residual intersection is called \textit{geometric} if $\height(I+J) \geq s+1.$ This is a generalization of linkage. Two proper ideals, $I$ and $J$, are said to be \textit{linked} if $I = \mathfrak{a}:J$ and $J = \mathfrak{a}:I$ for some $R$-ideal $\mathfrak{a}$ generated by a length $g$ regular sequence. The ideals $I$ and $J$ are said to be \textit{geometrically linked} if $\height(I+J) = g+1$.

Linkage is a well researched and well understood concept, in particular, when it comes to its connection to Cohen-Macualay and Gorenstein properties. In 1974, Peskine and Szpiro showed that if $R$ is a local Gorenstein ring and $I$ and $J$ are linked with $J = \mathfrak{a}:I$, then $R/J$ is Cohen-Macaulay if and only $R/I$ is Cohen-Macaulay. They also showed that in this case the canonical module of $R/I$ is $J/\mathfrak{a}$ \cite{PS}.

There have been attempts to find similar results for residual intersections. If $R$ is a local Gorenstein ring and $I$ is a proper $R$-ideal such that $R/I$ is Cohen-Macaulay, for an $s$-residual intersection $J$ of $I$, $R/J$ is not necessarily Cohen-Macaulay. However, there is a relationship. The first work exploring this relationship was in a 1983 paper by Huneke where he found that if $I$ is strongly Cohen-Macaulay and satisfies the $G_s$ condition then $R/J$ is Cohen-Macaulay \cite{H}. Papers by Herzog, Vasconcelos and Villarreal \cite{HVV} and by Hunke and Ulrich \cite{HU} explored how to weaken the strongly Cohen-Macaulay assumption. In 1994, Ulrich generalized this work and found settings where one can compute the canonical module of $R/J$ \cite{U}. There is still new research coming out on the computation of the canonical module of $R/J$, on when $R/J$ is Cohen-Macaulay and more recently, the relationship between residual intersections and the Gorenstein properties of Rees algebras \cite{HN, CNT, EU}.

However, the actual computation of residual intersections is not as well understood. Previously, it has been known how to compute an $s$-residual intersection in the following special cases - when, in the notation of the above definition, $I$ is a complete intersection \cite{HU}, when $I$ is a perfect ideal of height two \cite{H}, when $R/I$ is Gorenstein and $I$ is of height three \cite{KU}, and for certain $(\height(I)+1)$-residual intersections \cite{KMU}. Using their more general results, Bouca and Hassanzadeh gave a formula to compute an $s$-residual intersection of $I$ when $I$ is an almost complete intersection \cite{BH}.

In this paper, we study how to compute residual intersections in special cases. Under suitable technical assumptions, we are able to express $s$-residual intersections, for $s \geq \mu(I)-2$, in terms of $(\mu(I)-2)$-residual intersections (Theorem 2.5). Then we show how this implies that $s$-residual intersections can be expressed in terms of links, if $\mu(I) \leq \height(I)+3$ and some other hypotheses are satisfied (Corollary 3.1, Corollary 3.2, Corollary 3.3 and Corollary 3.5).

\section{Main Theorem}

We will be working in settings where the conditions of $G_s$ and weakly $s$-residually $S_2$ are relevant, so we must define them. Let $R$ be a Noetherian ring, $I$ an $R$-ideal, and $s$ an integer. When we say $I$ satisfies  $G_s$, we mean that $\mu(I_p) \leq \dim R_p$ for all $p \in V(I)$ such that $\dim R_p \leq s-1$. An ideal $I$ is said to be \textit{weakly s-residually} $S_2$ if for every $i$ and with $\height (I) \leq i \leq s$ and every geometric $i$-residual intersection $J$ of $I$, $R/J$ is $S_2$.

The condition of being weakly $s$-residually $S_2$ is satisfied by a number of ideals. If $R$ is a local Cohen-Macaulay ring and $I$ is a generically complete intersection ideal, an almost complete intersection ideal and almost Cohen-Macaulay, then by \cite[Theorem 3.3]{HVV}  $I$ satisfies $AN_s^-$ for every $s$, which is a stronger condition than being $s$-residually $S_2$. If $R$ is a Gorenstein local ring and $I$ is an almost almost complete intersection ideal and Cohen-Macaulay, then by \cite[page 259]{AH} $I$ is strongly Cohen-Macaulay and thus by \cite[Theorem 4.5]{CNT} $I$ satisfies $AN_s$ for every $s$, which is a stronger condition than being $s$-residually $S_2$. If $R$ is a Gorenstein local ring and $I$ is licci then by \cite[5.3]{HU} $I$ satisfies $AN_s$ for every $s$. Examples of licci ideals include perfect ideals of grade 2 (\cite{A}, \cite{G}) and perfect Gorenstein ideals of grade 3 \cite{W}. One should note that if $R$ is a local Cohen-Macaulay ring and $I$ is a strongly Cohen-Macaulay ideal then by \cite[Theorem 4.5]{CNT} $I$ satisfies $AN_s$. Another set of examples of $s$-residually $S_2$ ideals are ideals generated by submaximal minors of generic symmetric matrices \cite{K}. The defining ideals of general projections also produce large classes of ideals that are $s$-residually $S_2$ \cite[Section 5]{CEU}.

Before we can begin the proof of our main theorem we must prove a few preliminary lemmas, the first of which is a quick computation for a special case of residual intersections.

\begin{lem}Let $(R,m,k)$ be a Noetherian local ring with $|k| = \infty$. Let $I$ be a proper $R$-ideal and  $\mathfrak{a} \subseteq I$. If $\mu(I/\mathfrak{a}) \leq 1$, then $\Fitt_0(I/\mathfrak{a}) = \mathfrak{a}:I$.\end{lem}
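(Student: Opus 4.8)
The plan is to reduce to the case where $I/\mathfrak{a}$ is cyclic and then invoke the standard relationship between the zeroth Fitting ideal and the annihilator. Recall the two general inclusions for a finitely generated $R$-module $M$: one always has $\Fitt_0(M) \subseteq \ann_R(M)$, and $\ann_R(M)^{n} \subseteq \Fitt_0(M)$ whenever $M$ is generated by $n$ elements. Taking $M = I/\mathfrak{a}$, the hypothesis $\mu(I/\mathfrak{a}) \leq 1$ says that either $M = 0$ or $M$ is cyclic, and in the cyclic case $n = 1$ makes the two inclusions collapse to $\Fitt_0(I/\mathfrak{a}) = \ann_R(I/\mathfrak{a})$.

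So the argument splits into two cases. If $\mu(I/\mathfrak{a}) = 0$, then $I/\mathfrak{a} = 0$, hence $I = \mathfrak{a}$; on one side $\Fitt_0(0) = R$ (the ideal of $0 \times 0$ minors), and on the other side $\mathfrak{a}:I = \mathfrak{a}:\mathfrak{a} = R$, so the two agree. If $\mu(I/\mathfrak{a}) = 1$, then by the collapse noted above it suffices to compute $\ann_R(I/\mathfrak{a})$: an element $r \in R$ annihilates $I/\mathfrak{a}$ exactly when $rI \subseteq \mathfrak{a}$, which is precisely the condition $r \in \mathfrak{a}:I$. Hence $\Fitt_0(I/\mathfrak{a}) = \ann_R(I/\mathfrak{a}) = \mathfrak{a}:I$. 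Alternatively, in this case one can be fully explicit: writing $I = \mathfrak{a} + (x)$, a free presentation of $I/\mathfrak{a}$ has a presentation matrix whose entries generate $\mathfrak{a}:x$, so its $1 \times 1$ minors generate $\Fitt_0(I/\mathfrak{a}) = \mathfrak{a}:x$, and $\mathfrak{a}:x = \mathfrak{a}:I$ since $x$ together with $\mathfrak{a}$ generates $I$.

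I do not expect any real obstacle here; the statement is essentially the cyclic-module identity $\Fitt_0(R/\mathfrak{b}) = \mathfrak{b}$ repackaged, together with the elementary identification $\ann_R(I/\mathfrak{a}) = \mathfrak{a}:I$. I would also remark that the hypothesis $|k| = \infty$ plays no role in this particular lemma — it is a standing assumption used for later results — since $\mu(I/\mathfrak{a}) \leq 1$ already forces $I/\mathfrak{a}$ to be cyclic or zero over the local ring $R$ by Nakayama, with no genericity needed.
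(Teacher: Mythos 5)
Your proof is correct and takes essentially the same route as the paper: both reduce to the fact that $I/\mathfrak{a}$ is cyclic (or zero) and that the zeroth Fitting ideal of a cyclic module equals its annihilator, which here is $\ann_R(I/\mathfrak{a}) = \mathfrak{a}:I$; indeed your ``explicit'' alternative, presenting $I/\mathfrak{a}$ via $R \xrightarrow{x} I/\mathfrak{a}$ with relation ideal $\mathfrak{a}:x = \mathfrak{a}:I$, is precisely the paper's argument. Your observation that the hypothesis $|k| = \infty$ is not used in this lemma is also accurate.
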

\begin{proof}
Since $\mu(I/\mathfrak{a}) \leq 1$, there exists an $x \in I$ such that $(x)+\mathfrak{a} = I $. Let $\mathfrak{a}:I = (b_1,\dots,b_n)$. Let $A$ be a $1 \times n$ matrix such that $A = [b_1,\dots,b_n]$. Notice that the following is an exact sequence:
$$R^n \xrightarrow{A} R \xrightarrow{x} I/\mathfrak{a} \rightarrow 0. $$

So $\Fitt_0(I/\mathfrak{a}) = I_1(A) = J$.
\end{proof}

We make use of \cite[Lemma 1.3]{U}, and thus will restate it here for the convenience of the reader:

\begin{lem}\cite[Lemma 1.3]{U} Let $(R,m,k)$ be a Noetherian ring with $|k| = \infty$. Let $M$ be a finitely generated $R$-module, consider (not necessarily distinct) prime ideals $p_1,\dots,p_n$ of $R$, and submodules $N_1, \dots, N_m$ of $M$. Then there exists $x \in M$ such that for every $1 \leq i \leq n$ and $1 \leq j \leq m$ , $\mu((M/N_j+(x))_{p_i}) = \max\{0,\mu((M/N_j)_{p_i}) - 1 \}$

\end{lem}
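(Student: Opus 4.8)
The plan is to reduce the statement to linear algebra over the residue fields $\kappa(p_i) := R_{p_i}/p_iR_{p_i}$ together with a prime-avoidance argument. Recall that for a finitely generated module $X$ over the local ring $R_{p_i}$ one has $\mu(X) = \dim_{\kappa(p_i)}\!\big(X\otimes_{R_{p_i}}\kappa(p_i)\big)$. Writing $V_i := M\otimes_R\kappa(p_i)$, letting $\phi_i\colon M\to V_i$ be the canonical map (so $\phi_i(M)$ spans $V_i$ over $\kappa(p_i)$), and letting $\bar N_j\subseteq V_i$ be the image of $N_j\otimes_R\kappa(p_i)$, right exactness of $\otimes$ gives $(M/N_j)_{p_i}\otimes\kappa(p_i) = V_i/\bar N_j$ and $(M/(N_j+(x)))_{p_i}\otimes\kappa(p_i) = V_i/\big(\bar N_j+\kappa(p_i)\phi_i(x)\big)$. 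Hence I would record the identity
\[
\mu\big((M/(N_j+(x)))_{p_i}\big) = \begin{cases}\mu\big((M/N_j)_{p_i}\big) & \text{if } \phi_i(x)\in\bar N_j,\\[2pt] \mu\big((M/N_j)_{p_i}\big)-1 & \text{if } \phi_i(x)\notin\bar N_j.\end{cases}
\]
In particular, as soon as $\phi_i(x)\notin\bar N_j$ for a pair with $\mu((M/N_j)_{p_i})\ge 1$, the right-hand side is exactly $\max\{0,\mu((M/N_j)_{p_i})-1\}$; and for pairs with $\mu((M/N_j)_{p_i})=0$ there is nothing to do, since the quantity is then $0=\max\{0,-1\}$ for every $x$.

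Next I would set $\Lambda := \{(i,j) : \mu((M/N_j)_{p_i})\ge 1\}$ and, for $(i,j)\in\Lambda$, put $L_{ij} := \phi_i^{-1}(\bar N_j)\subseteq M$; the task is then to find a single $x\in M\setminus\bigcup_{(i,j)\in\Lambda}L_{ij}$. The first thing to check is that each such $L_{ij}$ is a \emph{proper} submodule of $M$: if $\phi_i(M)\subseteq\bar N_j$, then, as $\phi_i(M)$ spans $V_i$, we would get $V_i=\bar N_j$, i.e. $\mu((M/N_j)_{p_i})=0$, contradicting $(i,j)\in\Lambda$. Since $\Lambda$ is finite, it then remains to invoke a module-theoretic prime avoidance to produce $x$ outside the union, and the identity above finishes the proof.

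The one step that genuinely uses $|k|=\infty$, and the one I would take most care over, is this avoidance statement: \emph{a finitely generated module $M$ over a local ring $(R,m,k)$ with $|k|=\infty$ is not the union of finitely many proper submodules}. I would prove it by noting that, by Nakayama's lemma, a proper submodule $L\subsetneq M$ has proper image $\bar L$ in the finite-dimensional $k$-vector space $M/mM$; were $M$ the union of finitely many proper submodules, $M/mM$ would be a union of finitely many proper subspaces, which is impossible over an infinite field by the standard line argument. (It is also worth remarking that each $\kappa(p_i)$ is automatically infinite, since $R/m$ is a homomorphic image of the domain $R/p_i$, forcing $R/p_i$ and hence $\kappa(p_i)=\mathrm{Frac}(R/p_i)$ to be infinite; this is what keeps the dimension count meaningful.) There is no single hard computation here — the only real pitfall is the bookkeeping, especially making sure one only needs $\phi_i(x)\notin\bar N_j$ on the pairs in $\Lambda$, which is precisely why the conclusion is stated with the truncation $\max\{0,\cdot\}$.
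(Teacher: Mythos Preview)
The paper does not supply its own proof of this lemma: it merely restates \cite[Lemma~1.3]{U} for the reader's convenience, so there is nothing to compare your argument against on the paper's side. That said, your proof is correct and is essentially the standard one. The reduction via Nakayama to the vector spaces $V_i=M\otimes_R\kappa(p_i)$, the identification of the pairs $(i,j)$ for which the condition is nontrivial, and the passage to $M/mM$ to avoid finitely many proper submodules over the infinite field $k$ are exactly the ingredients one expects; each step is justified cleanly. Two minor remarks: the observation that every $\kappa(p_i)$ is infinite is true but unnecessary, since your avoidance is carried out entirely in $M/mM$ over $k$; and in the Nakayama step for avoidance you are implicitly using that $M$ (hence $M/L_{ij}$) is finitely generated, which you have but might state explicitly.
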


The following lemma, while not critical to the proof of our main theorem, establishes the setting in which the main theorem is relevant.

\begin{lem} Let $(R,m,k)$ be a Noetherian local ring with $|k| = \infty$. Let $I$ be a proper $R$-ideal and  $\mathfrak{a} \subsetneq I$. Suppose $I$ satisfies $G_s$, $\mathfrak{a} : I$  is an $s$-residual intersection and $\mu(\mathfrak{a}+mI/mI) = t \leq s$. Then there exists a generating sequence $a_1,\dots,a_s$  of $\mathfrak{a}$ such that for every subset $\{\nu_1,\dots,\nu_i\} \subseteq \{1,\dots,s\}$, the following is true:

\begin{enumerate}
	\item $\height(a_{\nu_1},\dots,a_{\nu_i}):I \geq i$ for $0 \leq i \leq s$;
	\item $\mu((\mathfrak{a}/(a_{\nu_1},\dots,a_{\nu_i}))_p) \leq \dim R_p - i$ whenever $p \in V(I)$ and $i \leq \dim R_p \leq s-1$;
	\item $\mu((a_1,\dots,a_t) + mI/mI) = t$.
\end{enumerate}
\end{lem}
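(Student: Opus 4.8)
The plan is to construct $a_1,\dots,a_s$ one element at a time, taking each $a_j$ to be a general element of $\mathfrak a$ produced by Lemma 2.2, and then to deduce the three conclusions from a single inductive invariant. Two local facts are used throughout. If $p\in V(I)$ with $\dim R_p\le s-1$, then $\height(\mathfrak a:I)\ge s>\dim R_p$ gives $(\mathfrak a:I)_p=R_p$, hence $I_p\subseteq\mathfrak a_p$ and so $\mathfrak a_p=I_p$; with $G_s$ this gives $\mu(\mathfrak a_p)=\mu(I_p)\le\dim R_p$. If instead $p\not\supseteq I$ with $\dim R_p\le s-1$, then $\mathfrak a\subseteq p$ would force $(\mathfrak a:I)_p=\mathfrak a_p\ne R_p$, i.e.\ $p\supseteq\mathfrak a:I$ and $\dim R_p\ge s$, a contradiction; thus $\mathfrak a\not\subseteq p$.

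Write $N_T=(a_\tau:\tau\in T)$, and let $W\subseteq I/mI$ be the image of $\mathfrak a$, of dimension $t$. After step $j$ I maintain the invariant: for every $T\subseteq\{1,\dots,j\}$ and every $p\in V(I)$ with $\dim R_p\le s-1$, one has $\mu((\mathfrak a/N_T)_p)\le\max\{0,\dim R_p-|T|\}$. For $T=\varnothing$ this is $G_s$ (via $\mathfrak a_p=I_p$), so it holds initially; note it forces $(\mathfrak a/N_T)_p=0$ whenever $\dim R_p\le|T|$. At step $j$ I apply Lemma 2.2 to the module $\mathfrak a$ with the submodules $N_T$ for all $T\subseteq\{1,\dots,j-1\}$ (so $0=N_\varnothing$ occurs) and, when $j\le t$, also $\ker(\mathfrak a\twoheadrightarrow W/(k\bar a_1+\cdots+k\bar a_{j-1}))$; and with the primes $m$, the minimal primes of $N_T:I$ of height $\le s-1$ not containing $I$, and, for each $d$ with $|T|<d\le s-1$, the minimal primes of dimension $d$ of $\Fitt_{d-|T|-1}(\mathfrak a/N_T)+I$ — all for $T\subseteq\{1,\dots,j-1\}$. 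This last family is finite, since the invariant for $N_T$ forces $\height(\Fitt_{d-|T|-1}(\mathfrak a/N_T)+I)\ge d$, so any prime of dimension $d$ over it is minimal. Lemma 2.2 then yields $a_j\in\mathfrak a$ dropping $\mu((\mathfrak a/N)_p)$ by one for every listed pair at once.

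Everything now follows. As $m$ is listed and $0$ is among the submodules, $\mu(\mathfrak a/(a_1,\dots,a_j))$ strictly decreases until $0$, so $a_1,\dots,a_s$ generate $\mathfrak a$. The invariant persists: for $T\cup\{j\}$ and $p\in V(I)$ with $\dim R_p\le s-1$, if $\dim R_p\le|T|$ then $(\mathfrak a/N_T)_p=0$ already; otherwise either $\mu((\mathfrak a/N_T)_p)\le\dim R_p-|T|-1$ already, or it equals $\dim R_p-|T|$, in which case $p$ is a listed minimal prime and $a_j$ drops it — either way $\mu((\mathfrak a/N_{T\cup\{j\}})_p)\le\max\{0,\dim R_p-|T|-1\}$. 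Restricting to $\dim R_p\ge|T|+1$ this is (2) for $T\cup\{j\}$, so (2) holds for all subsets. For (1), argue by induction on $i$: if a prime $p$ of height $\le i-1$ contained $N_S:I$ with $|S|=i$ and $j=\max S$, then, as $N_{S\setminus\{j\}}:I\subseteq N_S:I$ has height $\ge i-1$, $p$ would be a minimal prime of $N_{S\setminus\{j\}}:I$ of height exactly $i-1$; if $p\supseteq I$, the invariant at $\dim R_p=i-1=|S\setminus\{j\}|$ gives $(\mathfrak a/N_{S\setminus\{j\}})_p=0$, whence $I_p=(N_{S\setminus\{j\}})_p\subseteq(N_S)_p$ and $(N_S:I)_p=R_p$, impossible; if $p\not\supseteq I$, then $\mathfrak a\not\subseteq p$, $p$ is listed at step $j$, $a_j$ is a unit in $R_p$, and again $(N_S:I)_p=R_p$, impossible. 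Finally, for $j\le t$ the extra submodule with the prime $m$ makes the image of $a_j$ in $W/(k\bar a_1+\cdots+k\bar a_{j-1})$ nonzero, so $\bar a_1,\dots,\bar a_t$ are $k$-linearly independent in $I/mI$, which is (3).

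The real obstacle is the finiteness used in the second paragraph. Demanding that one sequence work for \emph{all} subsets forces every $N_T$ to be carried along, and a priori condition (2) constrains infinitely many primes; the saving feature is that the invariant is self-propagating — at the boundary dimensions it annihilates $\mathfrak a/N_T$, which bounds the height of the relevant Fitting ideals of $\mathfrak a/N_T$ and leaves only their minimal primes, finitely many, for Lemma 2.2 to address. This self-propagation is precisely where $G_s$ and the assumption that $\mathfrak a:I$ is an $s$-residual intersection are used; condition (3), on the other hand, is cheap, costing only the $t$ extra submodules paired with the prime $m$.
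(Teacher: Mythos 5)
Your proposal is correct and follows essentially the same route as the paper: both iterate Lemma 2.2 (Ulrich's Lemma 1.3) on the module $\mathfrak{a}$ against the family of submodules generated by subsets of the previously chosen elements (plus the extra submodule tracking the image in $I/mI$ for condition (3)), listing $m$ together with minimal primes of Fitting-type loci so that the finite prime set suffices while the local bound $\mu((\mathfrak{a}/N_T)_p)\le \dim R_p-|T|$ self-propagates. The only difference is bookkeeping: the paper maintains the vanishing condition at \emph{all} primes of small dimension and deduces (1) from the resulting local equality $\mathfrak{a}_p=N_p$ via the minimal primes of $\Fitt_0(\mathfrak{a}/N_T)$, whereas you restrict the invariant to $V(I)$ and recover (1) by a separate induction using the minimal primes of the colon ideals $N_T:I$ --- an immaterial variation.
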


\begin{proof}
By induction on $0 \leq l \leq s$, we are going to construct elements $a_1,\dots,a_s$ such that for every subset $\{\nu_1,\dots,\nu_i\} \subseteq \{1,\dots,l\}$, the following are true:
\begin{enumerate}[label=(\roman*)]
	\item $\mu(\mathfrak{a}/(a_1,\dots,a_l)) = \max\{0, s-l\};$
	\item $(\mathfrak{a}/(a_{\nu_1},\dots,a_{\nu_i}))_p = 0$ where $p \in \Spec(R)$ with $\dim R_p \leq i-1;$
	\item $\mu((\mathfrak{a}/(a_{\nu_1},\dots,a_{\nu_i}))_p) \leq \dim R_p - i$ whenever $p \in V(I)$ and $i \leq \dim R_p \leq s-1;$
	\item $\mu(\mathfrak{a}/\mathfrak{a}\cap mI + (a_1,\dots,a_l)) = \max\{0,t-l\}.$
\end{enumerate}

When $l = 0$, this is clear as $I$ satisfies $G_s$ and $I_p = a_p$ for all $p \in \Spec(R)$ such that $\dim R_p \leq s-1$.

Let $1 \leq l \leq s$ and assume $a_1,\dots,a_{l-1}$ have already been constructed. To obtain $a_l$, we wish to apply \cite[Lemma 1.3]{U} to the module $M = \mathfrak{a}$ and a finite family, $\mathcal{M}$, of submodules of the form $N = (a_{\nu_1},\dots,a_{\nu_i})$ for every subset $\{\nu_1,\dots,\nu_i\} \subseteq \{1,\dots,l-1\}$ and of the form $N = \mathfrak{a}\cap mI + (a_1,\dots,a_{l-1})$.

For $0 \leq j \leq s$ consider the $j$th Fitting ideals of the $R$-modules $\mathfrak{a}/N$, $F_j = \Fitt_j(\mathfrak{a}/N)$, which define the loci $V(F_j) = \{ p \in \Spec(R) \; | \; \mu((\mathfrak{a}/N)_p) > j\}$. Now, let $\mathcal{Q}$ be the finite subset of $\Spec(R)$ consisting of $m$, the maximal ideal, and of all minimal primes in $V(F_0)$  and all minimal primes ideals in $V(I+F_j)$, for $0\leq j \leq s$ and every $N \in \mathcal{M}$. By \cite[Lemma 1.3]{U}, there exists $a_l \in \mathfrak{a}$ such that $\mu((\mathfrak{a}/N+(a_l))_p) = \max\{0,\mu((\mathfrak{a}/N)_p-1)\}$ for every $p \in \mathcal{Q}$ and ever $N \in \mathcal{M}$.

Note that $a_1, \dots, a_k$ satisfy (i) and (iv) as $m \in \mathcal{Q}$ and, by induction, $\mu(\mathfrak{a}/(a_1,\dots,a_{l-1})) = \max\{0, s-(l-1)\}$ and $\mu\Big(\mathfrak{a}/\big(\mathfrak{a}\cap mI + (a_1,\dots,a_{l-1})\big)\Big) = \max\{0,t-(l-1)\}$. For (ii) and (iii), it suffices to check for factor modules of the form $\mathfrak{a}/(a_{\nu_1},\dots,a_{\nu_i},a_l) = \mathfrak{a}/(N+(a_l))$ for $N \in \mathcal{M}\backslash \{\mathfrak{a}\cap mI + (a_1,\dots,a_{l-1})\}$. Write $F_j = \Fitt_j(\mathfrak{a}/N)$.

To prove (ii), let $p \in \Spec(R)$ with $\dim R_p \leq i$. If $p \notin V(F_0)$, then $(\mathfrak{a}/N)_p = 0$ and we are done. So, assume that $F_0 \subseteq p$. Furthermore, by induction and (ii) applied to $\mathfrak{a}/N$, $\height F_0 \geq i$, which implies that $\dim R_p = i$ and thus $p$ is minimal in $V(F_0)$. Hence, $p \in \mathcal{Q}$. So, by our choice of $a_l$ we conclude that $\mu((\mathfrak{a}/N+(a_l))_p) = \max \{0, \mu((\mathfrak{a}/N)_p)-1\}$. If $p \notin V(\mathfrak{a})$, then $\mu((\mathfrak{a}/N)_p)\leq 1$. If $p \in V(\mathfrak{a})$, then $p \in V(I)$ since for $p \in \Spec(R)$ with $\dim R_p \leq s-1$, $I_p = \mathfrak{a}_p$. Thus, by induction and (iii) applied to $\mathfrak{a}/N$, $\mu((\mathfrak{a}/N)_p) = 0$. In either case, $\mu((\mathfrak{a}/N+(a_l))_p) = 0$ which shows (ii).

To prove (iii), let $p \in V(I)$ with $i+1 \leq \dim R_p \leq s-1$. Write $j = \dim R_p - i - 1$. Notice $0 \leq j \leq s-i - 2$. If $p \notin V(I+F_j)$, then $p \notin V(F_j)$ and hence $\mu((\mathfrak{a}/N)_p) \leq j$ so we are done. So, assume $I+F_j \subseteq p$. We will show $\height(I+F_j) \geq j+i+1$ for $0 \leq j \leq s-i-2$. Let $p'$ be in $ V(I+F_j)$. Since $p' \in V(F_j)$, $\mu((\mathfrak{a}/N)_{p'}) \geq j+1$. Since $p'$ is also in $V(I)$, $\mu((\mathfrak{a}/N)_{p'}) \leq \dim R_{p'}-i$, by induction and (iii) applied to $\mathfrak{a}/N$. Thus, $\dim R_{p'} \geq j+i+1$ and we are done. Therefore $j+i+1 = \dim R_p \geq \height (I+F_j)  \geq j+i+1$, which shows that $p$ is minimal in $V(I+F_j)$ and thus $p \in \mathcal{Q}$. Now our choice of $a_l$ implies that $\mu((\mathfrak{a}/N+(a_l))_p) = \max\{0,\mu(\mathfrak{a}/N)_p-1\}$. By induction and (iii) applied to $\mathfrak{a}/N$, $\mu((\mathfrak{a}/N)_p) \leq \dim R_p - i$. Thus, $\mu((\mathfrak{a}/N+(a_l))_p) \leq \max \{ 0, \dim R_p-i-1 \} = \dim R_p -i -1$.
\end{proof}

Note that condition (1) in Lemma 2.3 is equivalent to $(a_{\nu_1},\dots a_{\nu_i}):I$ being an $i$-residual for every subset $\{\nu_1,\dots,\nu_i\} \subseteq \{1,\dots,s\}$ and $0\leq i \leq s$. Meanwhile condition (2) implies that $(a_{\nu_1},\dots a_{\nu_i}):I$ is also geometric for every subset $\{\nu_1,\dots,\nu_i\} \subseteq \{1,\dots,s\}$ and $0\leq i \leq s-1$.

The following lemma is critical to proving the main theorem. Note that the proof applies the same technique as the previous lemma. This is a common technique, for instance it was used in \cite{U}.

\begin{lem}
Let $I$ and $\mathfrak{a}$ be as in Lemma 2.3 and suppose $a_1,\dots,a_s$ have been selected as in Lemma 2.3 and that $t < s$. Then, we can select an $x \in I$ such that for $\mathfrak{a}' = (a_1,\dots,a_{s-1},x)$, $\mu(\mathfrak{a}'+mI/mI) = \mu((a_1,\dots,a_t,x) + mI/mI) = t+1$, and for any $\{a_{\nu_1},\dots,a_{\nu_i}\} \subseteq \{a_1,\dots,a_{s-1},x\}$, the following hold:

\begin{enumerate}
	\item $\height(a_{\nu_1},\dots,a_{\nu_i}):I \geq i$ for $0 \leq i \leq s$;
	\item $\mu((\mathfrak{a}/(a_{\nu_1},\dots,a_{\nu_i}))_p) \leq \dim R_p - i$ whenever $p \in V(I)$ and $i \leq \dim R_p \leq s-1$.
\end{enumerate}

\end{lem}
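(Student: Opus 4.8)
The plan is to run the general-element argument of Lemma 2.3 once more, but now choosing the new element inside the module $M = I$ rather than inside $\mathfrak a$, so that adjoining it can raise the number of generators modulo $mI$. I would take the finite family $\mathcal M$ of submodules of $I$ consisting of all $N = (a_{\nu_1},\dots,a_{\nu_{i-1}})$ with $\{\nu_1,\dots,\nu_{i-1}\}\subseteq\{1,\dots,s-1\}$ together with $N_0 = (a_1,\dots,a_t)+mI$. For $N\in\mathcal M$ and $0\le j\le s$ set $F_j^N = \Fitt_j(I/N)$, and let $\mathcal Q$ be the finite set consisting of $m$, the minimal primes of $V(F_0^N)$, and the minimal primes of $V(I+F_j^N)$, over all $N\in\mathcal M$ and $0\le j\le s$. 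Applying Lemma 2.2 to $I$, $\mathcal M$ and $\mathcal Q$ yields $x\in I$ with $\mu((I/(N+(x)))_p) = \max\{0,\mu((I/N)_p)-1\}$ for all $p\in\mathcal Q$, $N\in\mathcal M$.

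The assertion about $mI$ follows by testing $N_0$ at $p=m$. By Lemma 2.3(3), $\mathfrak a+mI = (a_1,\dots,a_t)+mI$ and $\mu((a_1,\dots,a_t)+mI/mI) = t$, so $\mu((I/N_0)_m) = \mu(I)-t$; since $\mathfrak a\subsetneq I$, Nakayama gives $t<\mu(I)$, hence $\mu(I)-t\ge 1$ and the choice of $x$ yields $\mu((a_1,\dots,a_t,x)+mI/mI) = t+1$. As $t\le s-1$, the chain $(a_1,\dots,a_t,x)+mI\subseteq\mathfrak a'+mI\subseteq\mathfrak a+(x)+mI = (a_1,\dots,a_t,x)+mI$ forces equality, so $\mu(\mathfrak a'+mI/mI)=t+1$ as well.

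For conditions (1) and (2) it is enough to treat subsets containing $x$, say $\mathfrak b=(a_{\nu_1},\dots,a_{\nu_{i-1}},x)$ with $N=(a_{\nu_1},\dots,a_{\nu_{i-1}})$ and $\{\nu_1,\dots,\nu_{i-1}\}\subseteq\{1,\dots,s-1\}$, since subsets avoiding $x$ are covered verbatim by Lemma 2.3. Two facts I would extract from Lemma 2.3: (a) $\height(N:I)\ge i-1$, equivalently $\height F_0^N\ge i-1$, equivalently $(I/N)_p=0$ for $\dim R_p\le i-2$; and (b) for $p\in V(I)$ with $i-1\le\dim R_p\le s-1$ one has $\mu((I/N)_p)=\mu((\mathfrak a/N)_p)\le\dim R_p-(i-1)$ --- here $\mathfrak a_p=I_p$ because $\mathfrak a:I$ is an $s$-residual intersection, and the inequality is Lemma 2.3(2). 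As in Lemma 2.3, (a) and (b) yield $\height(I+F_j^N)\ge i+j$ whenever $i+j\le s-1$: a prime $p'\supseteq I+F_j^N$ with $\dim R_{p'}\le s-1$ lies in $V(N:I)$, hence $\dim R_{p'}\ge i-1$ by (a), and then $j+1\le\mu((I/N)_{p'})\le\dim R_{p'}-(i-1)$ by (b); while $\dim R_{p'}\ge s>i+j$ in the remaining case. Now for (1), take $p$ with $\dim R_p\le i-1$: if $p\notin V(F_0^N)$ then $(I/\mathfrak b)_p=0$; otherwise $p$ is minimal over $F_0^N$ by (a), so $p\in\mathcal Q$, and the general-element property forces $(I/\mathfrak b)_p=0$ (use $\mu((I/N)_p)\le 1$ when $p\notin V(I)$, and (b) when $p\in V(I)$, where $\dim R_p\le s-1$); hence $\height(\mathfrak b:I)\ge i$. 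For (2), take $p\in V(I)$ with $i\le\dim R_p\le s-1$ and set $j=\dim R_p-i$: if $p\notin V(I+F_j^N)$ then $\mu((I/N)_p)\le j$, done; otherwise $p$ is minimal over $I+F_j^N$, so $p\in\mathcal Q$, and the general-element property together with (b) gives $\mu((\mathfrak a/\mathfrak b)_p)=\mu((I/\mathfrak b)_p)\le\max\{0,\dim R_p-i\}=\dim R_p-i$.

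The step I expect to be the main obstacle is the height estimate $\height(I+F_j^N)\ge i+j$: one must track the admissible range $0\le j\le s-1-i$ and split the test primes $p'$ according to whether $\dim R_{p'}\le s-1$, where $\mathfrak a=I$ locally and Lemma 2.3(1)--(2) apply, or $\dim R_{p'}\ge s$, where Lemma 2.3(2) is silent but the bound is automatic. A secondary subtlety is that condition (2) is phrased with $\mathfrak a/(\cdots)$ rather than $\mathfrak a'/(\cdots)$; this is harmless, since every prime tested lies in $V(I)$ with $\dim R_p\le s-1$, where $\mathfrak a_p=\mathfrak a'_p=I_p$.
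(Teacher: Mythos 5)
Your proposal is correct and follows essentially the same route as the paper's own proof: apply Lemma 2.2 to the module $M=I$ with the family of submodules $(a_{\nu_1},\dots,a_{\nu_{i-1}})$ together with $(a_1,\dots,a_t)+mI$, take $\mathcal Q$ to be $m$ plus the minimal primes of $V(F_0)$ and $V(I+F_j)$, and verify the conclusions via the height bounds on these Fitting-ideal loci coming from Lemma 2.3. Your extra remarks (the Nakayama step giving $t<\mu(I)$, and the observation that $\mathfrak a_p=\mathfrak a'_p=I_p$ at the test primes, which reconciles the statement's $\mathfrak a/(\cdots)$ with the modules $I/(\cdots)$ actually estimated) are correct refinements of details the paper leaves implicit.
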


\begin{proof}
Note that $a_1,\dots,a_{s-1}$ satisfy the following conditions for every subset $\{\nu_1,\dots,\nu_i\} \subseteq \{1,\dots,s-1\}$:

\begin{enumerate}[label=(\Roman*)]
    \item $\mu(I/ mI + (a_1,\dots,a_{t})) = n-t;$
	\item $(I/(a_{\nu_1},\dots,a_{\nu_i}))_p = 0$ where $p \in \Spec(R)$ with $\dim R_p \leq i-1;$
	\item $\mu((I/(a_{\nu_1},\dots,a_{\nu_i}))_p) \leq \dim R_p - i$ whenever $p \in V(I)$ and $i \leq \dim R_p \leq s-1.$
\end{enumerate}

We construct $x$ by applying \cite[Lemma 1.3]{U} to the module $M = I$ and a finite family, $\mathcal{M}$, of submodules of the form $N = (a_{\nu_1},\dots,a_{\nu_i})$ for every subset $\{\nu_1,\dots,\nu_i\} \subseteq \{1,\dots,s-1\}$ and of the form $N = mI + (a_1,\dots,a_t)$. For $0 \leq j \leq s$ consider the $j$th Fitting ideals of the $R$-modules $I/N$, $F_j = \Fitt_j(I/N)$, which define the loci $V(F_j) = \{ p \in \Spec(R) \; | \; \mu((I/N)_p) > j\}$. Now, let $\mathcal{Q}$ be the finite subset of $\Spec(R)$ consisting of $m$, the maximal ideal, and of all minimal primes in $V(F_0)$  and all minimal primes ideals in $V(I+F_j)$, for $0\leq j \leq s$ and every $N \in \mathcal{M}$. By \cite[Lemma 1.3]{U}, there exists $x \in I$ such that $\mu((I/N+(x))_p) = \max\{0,(I/N)_p-1\}$ for every $p \in \mathcal{Q}$ and ever $N \in \mathcal{M}$.

We show that $a_1,\dots,a_{s-1},x$ satisfy the following conditions for every subset $\{a_{\nu_1},\dots,a_{\nu_i}\} \subseteq \{a_1,\dots,a_{s-1},x\}$:

\begin{enumerate}[label=(\roman*)]
    \item $\mu(I/ mI + (a_1,\dots,a_{t},x)) = n-t-1;$
	\item $(I/(a_{\nu_1},\dots,a_{\nu_i}))_p = 0$ where $p \in \Spec(R)$ with $\dim R_p \leq i-1;$
	\item $\mu((I/(a_{\nu_1},\dots,a_{\nu_i}))_p) \leq \dim R_p - i$ whenever $p \in V(I)$ and $i \leq \dim R_p \leq s-1.$
\end{enumerate}

Item (i) is clear as $m \in \mathcal{Q}$, $mI+(a_1,\dots,a_{t}) \in \mathcal{M}$ and $\max\{0,\mu(I/mI+(a_1,\dots,a_{t}))-1\} = \max \{0,n-t-1\} = n-t-1$. For (ii) and (iii), it suffices to check factor modules of the form $I/(a_{\nu_1},\dots,a_{\nu_{i-1}},x) = I/N+(x)$ for $N \in \mathcal{M}\backslash \{mI + (a_1,\dots,a_{t})\}$. Write $F_j = \Fitt_j(I/N)$.

To prove (ii), let $p \in \Spec(R)$ with $\dim R_p \leq i-1$. If $p \notin V(F_0)$, then $(I/N)_p = 0$ and we are done. So, assume that $p \in V(F_0)$. Furthermore, by (II) applied to $I/N$, $\height F_0 \geq i-1$, which implies that $\dim R_p = i-1$ and thus $p$ is minimal in $V(F_0)$. Hence, $p \in \mathcal{Q}$. So, by our choice of $x$ we conclude that $\mu((I/N+(x))_p) = \max \{0, \mu((I/N)_p)-1\}$. If $p \notin V(I)$, then $\mu((I/N)_p)\leq 1$. If $p \in V(I)$ by (III) applied to $I/N$, $\mu((I/N)_p) = 0$. In either case, $\mu((I/N+(x))_p) = 0$ which shows (ii).

To prove (iii), let $p \in V(I)$ with $i \leq \dim R_p \leq s-1$. Write $j = \dim R_p - i$. Notice $0 \leq j \leq s-i - 1$. If $p \notin V(I+F_j)$, then $p \notin V(F_j)$ and hence $\mu((I/N)_p) \leq j$, and we are done. So, assume $p \in V(I+F_j)$. We will show $\height(I+F_j) \geq j+i$ for $0 \leq j \leq s-i$. Suppose there exists $p' \in V(I+F_j)$ with $\dim R_{p'} \leq j+i-1 \leq s-1$. Since $p' \in V(F_j)$, $\mu((I/N)_{p'}) \geq j+1$. Since $p'$ is also in $V(I)$, $\mu((I/N)_{p'}) \leq \dim R_{p'} -i + 1$ by (III). Thus, $\dim R_{p'} \geq j+i$, which is a contradiction. Therefor $j+i = \dim R_p \geq \height (I+F_j)  \geq j+i$, which shows that $p$ is minimal in $V(I+F_j)$ and thus $p \in \mathcal{Q}$. Now our choice of $x$ implies that $\mu((I/N+(x))_p) = \max\{0,\mu(I/N)_p-1\}$. By (III) applied to $I/N$, $\mu((I/N)_p) \leq \dim R_p - i + 1$. Thus, $\mu((I/N+(x))_p) \leq \max \{ 0, \dim R_p - i \} = \dim R_p -i$.\end{proof}
Now, we are going to prove our main theorem.
\begin{thm}Let $(R,m,k)$ be a local Cohen-Macaulay ring with $|k| = \infty$. Let $I$ be a proper $R$-ideal and  $\mathfrak{a} \subsetneq I$. Let $\mu(I) = n$ and $s \geq n-2$. Suppose $I$ satisfies $G_s$ and is weakly $(s-2)$-residually $S_2$, and $\mathfrak{a} : I$  is an $s$-residual intersection. For any generating sequence $a_1, \dots, a_s$ of $\mathfrak{a}$ as in Lemma 2.3 one has
$$\mathfrak{a}:I = \Fitt_0(I/\mathfrak{a}) + \sum_{\{a_{\nu_1},\dots,a_{\nu_{n-2}}\} \subseteq \{a_1,\dots,a_s\}} (a_{\nu_1},\dots,a_{\nu_{n-2}}):I.$$
\end{thm}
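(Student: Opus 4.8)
The plan is to prove the two containments separately, with the hard work going into showing $\mathfrak{a}:I \subseteq \Fitt_0(I/\mathfrak{a}) + \sum (a_{\nu_1},\dots,a_{\nu_{n-2}}):I$. The reverse inclusion is the easy direction: each summand $(a_{\nu_1},\dots,a_{\nu_{n-2}}):I$ is contained in $\mathfrak{a}:I$ because $(a_{\nu_1},\dots,a_{\nu_{n-2}}) \subseteq \mathfrak{a}$; and $\Fitt_0(I/\mathfrak{a}) \subseteq \ann(I/\mathfrak{a}) = \mathfrak{a}:I$ by the standard fact that the zeroth Fitting ideal sits inside the annihilator. So both containments would be established once we have the forward inclusion, and the asserted equality follows.

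For the forward inclusion I would localize and use the $S_2$ hypothesis to reduce to checking at small primes. Let $J = \mathfrak{a}:I$ and let $K = \Fitt_0(I/\mathfrak{a}) + \sum_{\{a_{\nu_1},\dots,a_{\nu_{n-2}}\} \subseteq \{a_1,\dots,a_s\}} (a_{\nu_1},\dots,a_{\nu_{n-2}}):I$; we have just seen $K \subseteq J$, so $J/K$ is a module over $R/K \supseteq R/J$, and it suffices to show $J_p = K_p$ for all $p$. Since $R/J$ is $S_2$ (note $J$ is a geometric $s$-residual intersection by the remark following Lemma 2.3, via condition (2), and $I$ is weakly $s$-residually $S_2$ — or at least we can arrange the relevant residuals to satisfy $S_2$), it is enough to check equality at primes $p \in V(J)$ with $\dim R_p \le s+1$, or more precisely at the primes where $J/K$ could be nonzero; an associated prime of $J/K$ would be an associated prime of $R/K$, hence, using that $K \subseteq J$ and $R/J$ satisfies Serre's condition, forces $\dim R_p \le \height J + 1 \le s+1$. (I expect the precise codimension bookkeeping here — squaring the $S_2$ condition on the quotient against the fact that we only know the residuals up to height $s-2$ are $S_2$ — to be the main technical subtlety, and the reason the hypothesis is ``weakly $(s-2)$-residually $S_2$'' rather than something involving $s$.)

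Having localized at such a $p$, the point is that over $R_p$ the ideal $I_p$ is generated by few elements: if $p \notin V(I)$ then $I_p = R_p$ and everything is trivial, while if $p \in V(I)$ then $\dim R_p \le s+1$, and combining $G_s$ (so $\mu(I_p) \le \dim R_p$ when $\dim R_p \le s-1$) with the inequality $\mu(I) \le n$ and $s \ge n-2$, we get that $I_p$ needs at most... the relevant bound to make $\mu(I_p/\mathfrak{a}_p') \le 1$ for a suitable subideal $\mathfrak{a}_p'$ generated by $n-2$ of the $a_i$. Concretely: by condition (2) of Lemma 2.3, for a well-chosen subset $\{\nu_1,\dots,\nu_{n-2}\}$ one has $\mu((\mathfrak{a}/(a_{\nu_1},\dots,a_{\nu_{n-2}}))_p)$ small, and since $I_p = \mathfrak{a}_p$ when $\dim R_p \le s-1$ (and can be handled directly when $\dim R_p \in \{s,s+1\}$), we get $\mu(I_p/(a_{\nu_1},\dots,a_{\nu_{n-2}})_p) \le 1$. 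Then Lemma 2.1 applies over $R_p$: $\Fitt_0(I_p/(a_{\nu_1},\dots,a_{\nu_{n-2}})_p) = (a_{\nu_1},\dots,a_{\nu_{n-2}}):I$ localized at $p$, and one checks this Fitting ideal is contained in $K_p$ (it is a sum of a subideal of $\Fitt_0(I/\mathfrak{a})_p$-type contributions and the link term), which gives $J_p \subseteq K_p$ locally. Running this at every relevant $p$ and invoking the $S_2$ reduction yields $J \subseteq K$ globally, completing the proof.

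Hmm, wait — I should double-check the case analysis at primes of dimension exactly $s$ or $s+1$, where $G_s$ gives no control and $I_p$ need not equal $\mathfrak{a}_p$; there one likely argues that $p \supseteq I + J$ is impossible for $\dim R_p \le s$ by the geometric hypothesis, so $p \not\supseteq I$ forces $I_p = R_p$ and triviality, and the only genuinely new primes are those with $\dim R_p = s+1$, handled by the $S_2$ property of $R/J$ showing $\mathrm{depth}(R/J)_p \ge 2$ together with $K \subseteq J$ agreeing in codimension $\le s$. This codimension-$s+1$ step is where I expect to spend the most effort.
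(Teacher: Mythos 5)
Your easy inclusion is fine, but the main mechanism of your argument has a genuine gap: the reduction to primes of small codimension is not valid. To conclude $J/K=0$ from local vanishing you must control the associated primes of $J/K$, and these lie in $\mathrm{Ass}(R/K)$, not in $\mathrm{Ass}(R/J)$; nothing in the hypotheses bounds the (possibly embedded) associated primes of $R/K$, since $K$ is exactly the ideal whose structure we are trying to determine. Moreover the $S_2$ property you invoke for $R/J$ is not available: the hypothesis is that $I$ is weakly $(s-2)$-residually $S_2$, which only concerns \emph{geometric} $i$-residual intersections for $i\le s-2$, and the $s$-residual intersection $J=\mathfrak{a}:I$ in the theorem is not assumed geometric (the remark after Lemma 2.3 only yields geometricity for the proper subfamilies with $i\le s-1$). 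For the same reason your proposed escape at primes with $\dim R_p\in\{s,s+1\}$ fails: since $\height(I+J)\ge s+1$ is not assumed, primes $p\supseteq I+J$ with $\dim R_p=s$ or $s+1$ cannot be excluded, and at such primes $G_s$ gives no bound on $\mu(I_p)$, $I_p\ne\mathfrak{a}_p$ in general, and $\mu\big(I_p/(a_{\nu_1},\dots,a_{\nu_{n-2}})_p\big)\le 1$ need not hold because the $a_i$ are chosen general in $\mathfrak{a}$, not in $I$. This discrepancy between $\mathfrak{a}$ and $I$, measured by $t=\mu\big((\mathfrak{a}+mI)/mI\big)$, is precisely the hard case; when $t=n-1$ Lemma 2.1 does apply (globally, not just locally), but for $t<n-1$ your sketch has no mechanism to handle it.

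The paper's proof is structured entirely differently and is worth comparing against. It is a double induction: on $s$ (base case $s=n-2$ trivial) and, for fixed $s$, a decreasing induction on $t$ (base case $t=n-1$ via Lemma 2.1). For $t<n-1$ one uses Lemma 2.4 to replace $a_s$ by a general element $x\in I$, forming $\mathfrak{a}'=(a_1,\dots,a_{s-1},x)$ with $t$ increased by one, and then works modulo $J_{s-1}=(a_1,\dots,a_{s-1}):I$. The weakly $(s-2)$-residually $S_2$ hypothesis enters exactly here, through the results of Chardin--Eisenbud--Ulrich, to guarantee that $\overline{a_s}$ and $\overline{x}$ are nonzerodivisors on $R/J_{s-1}$ and that $I/\mathfrak{a}_{s-1}\cong \overline{I}$; a cofactor computation with the presentation matrices then yields the exchange relations $x\,(\mathfrak{a}:I)+J_{s-1}=a_s\,(\mathfrak{a}':I)+J_{s-1}$ and $x\,\Fitt_0(I/\mathfrak{a})+J_{s-1}=a_s\,\Fitt_0(I/\mathfrak{a}')+J_{s-1}$ (and the analogous relation for the sums of $(n-2)$-fold colons). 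Combining these with the two inductive hypotheses and cancelling the nonzerodivisor $\overline{x}$ gives the identity, after absorbing $J_{s-1}$ using the induction on $s$. If you want to salvage a localization-style argument you would need, at minimum, an a priori unmixedness or depth statement about the right-hand side $K$, which is not known before the theorem is proved.
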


\begin{proof}
We induct on $s$. When $s = n-2$, this is clear.
So, suppose $s \geq n-1$ and that the theorem is true for $s-1$. We use reverse induction on $t = \mu((\mathfrak{a}+mI)/mI)$. Begin with $t = n-1$, which implies that $\mu(I/\mathfrak{a}) = 1$, and thus applying Lemma 2.1, we have that $\mathfrak{a}:I = \Fitt_0(I/\mathfrak{a})$. Finally, noting that $$\sum_{\{a_{\nu_1},\dots,a_{\nu_{n-2}}\} \subseteq \{a_1,\dots,a_s\}} (a_{\nu_1},\dots,a_{\nu_{n-2}}):I \subseteq \mathfrak{a}:I$$

we have, $$\mathfrak{a}:I = \Fitt_0(I/\mathfrak{a}) + \sum_{\{a_{\nu_1},\dots,a_{\nu_{n-2}}\} \subseteq \{a_1,\dots,a_s\}} (a_{\nu_1},\dots,a_{\nu_{n-2}}):I. $$

Now, suppose $t < n-1$ and that the theorem is true for $s$ and $t+1$. By Lemma 2.4, we can construct $\mathfrak{a}'$ generated by $a_1,\dots,a_{s-1},x$ such that, $\mu(\mathfrak{a}' + mI/mI) = t+1$ and the generators satisfy the hypothesis of our theorem. By our choices of $a_1,\dots,a_t,x$, we can select $n-t-1$ general elements $x_{t+1},\dots,x_{n-1}$ such that $I = (x_1,\dots,x_n)$ where $x_i = a_i$ for $i \leq t$ and $x_{n} = x$.

Let $J_{s-1} = (a_1,\dots,a_{s-1}):I$ and $\mathfrak{a}_{s-1} = (a_1,\dots,a_{s-1})$. Let $"\bar{\;}"$ represent images in $\overline{R} = R/J_{s-1}$. Applying \cite[Corollary 3.6.a, Proposition 3.3.b]{CEU}, we get that $\overline{a_s}$ and $\overline{x}$ are non-zerodivisors on $\overline{R}$. In particular, $\grade(\overline{I}) \geq 0$. As $\overline{x}$ and $\overline{a_s}$ are non-zerodivisors, we have that
\begin{equation}
    x(\mathfrak{a}:I)+J_{s-1} = a_s(\mathfrak{a}':I)+J_{s-1}.
\end{equation}
We claim that, similarly,
\begin{equation}
\begin{split}
  a_s\Bigg(\Fitt_0(I/\mathfrak{a}') + \sum_{\{a_{\nu_1},\dots,a_{\nu_{n-3}}\} \subseteq \{a_1,\dots,a_{s-1}\}} (a_{\nu_1},\dots,a_{\nu_{n-3}},x):I \Bigg) + J_{s-1} =  \\
  x \Bigg(\Fitt_0(I/\mathfrak{a}) + \sum_{\{a_{\nu_1},\dots,a_{\nu_{n-3}}\} \subseteq \{a_1,\dots,a_{s-1}\}} (a_{\nu_1},\dots,a_{\nu_{n-3}},a_s):I \Bigg) + J_{s-1}.
\end{split}
\end{equation}

To prove this, first we show that $x(\Fitt_0(I/\mathfrak{a})) + J_{s-1} = a_s(\Fitt_0(I/\mathfrak{a}')) + J_{s-1}$. Letting $a_s = \sum_{i=1}^{n}c_ix_i$, we can define $n \times s$ matrices $B$ and $B'$ where $[a_1,\dots,a_s] = [x_1,\dots,x_{n}]B$ and $[a_1,\dots,a_{s-1},x] = [x_1,\dots,x_{n}]B'$ such that the only difference between them is that the last column of $B$ is $[c_1,\dots,c_{n}]^t$ and the last column of $B'$ is $[0,\dots,0,1]^t$.
Consider an $n \times k$ matrix $A$ such that $R^k \xrightarrow{A} R^{n} \xrightarrow{[x_1,\dots,x_{n}]} I \rightarrow 0$ is exact.

Notice that $$R^{k+s} \xrightarrow{[A|B]} R^{n} \xrightarrow{[x_1,\dots,x_{n}]} I/\mathfrak{a} \rightarrow 0$$ is exact and $$R^{k+s} \xrightarrow{[A|B']} R^{n} \xrightarrow{[x_1,\dots,x_{n}]} I/\mathfrak{a}' \rightarrow 0$$ is exact. As $\Fitt_0(I/\mathfrak{a}) = I_{n}([A|B])$ and $\Fitt_0(I/\mathfrak{a}') = I_{n}([A|B'])$, we can reduce down to looking at $(n) \times (n+1)$ matrices $D$ and $D'$ where the only difference is that the last column of  $D$ is $[c_1,\dots,c_{n}]^t$ and the last column of $D'$ is $[0,\dots,0,1]^t$. Let $E$ be $D$ with the last column removed. Equivalently, $E$ is $D'$ with the last column removed. Let $M_{i,j}$ be the cofactors of $E$. So we have the following,

$$I_{n}(D) = \Bigg(\det(E),\sum_{i=1}^{n}c_iM_{i,1},\dots,\sum_{i=1}^{n}c_iM_{i,n}\Bigg)$$ and

$$I_{n}(D')= (\det(E),M_{n,1},\dots,M_{n,n}).$$

Note that $\overline{[x_1,\dots,x_{n}]E}=0$ which gives that $\det(\overline{E})\:\overline{x_{n}}=0$. Since $\overline{x_{n}}$ is a non-zerodivisor, $\det(\overline{E}) = 0$. Thus $$x\;I_{n}(D) + J_{s-1} =  \Bigg(\sum_{i=1}^{n}c_ix_{n}M_{i,1},\dots,\sum_{i=1}^{n}c_ix_{n}M_{i,n}\Bigg) + J_{s-1}$$ and

$$a_{s}\;I_{n}(D') + J_{s-1}= \Bigg(\sum_{i=1}^{n}c_ix_i M_{n,1},\dots,\sum_{i=1}^{n}c_ix_i M_{n,n}\Bigg) + J_{s-1}.$$

Let $N$ be $\overline{[A|B]}$ (equivalently $\overline{[A|B']}$) with the last column removed. Notice $$\overline{R}^{k+s-1} \xrightarrow{N} \overline{R}^{n} \xrightarrow{[x_1,\dots,x_{n}]} I/\mathfrak{a}_{s-1}\xrightarrow{} 0 $$ is exact. By \cite[Corollary 3.4]{CEU}, $I/\mathfrak{a}_{s-1} \cong \overline{I}$. Since $\grade(\overline{I})>0$, $I_{n}(N) \subseteq \ann_{\overline{R}}I/\mathfrak{a}_{s-1} = \ann_{\overline{R}} \overline{I} = 0$. For for any fixed $j$, $[\overline{M_{1,j}},\dots,\overline{M_{n,j}}]N = 0$, as all entries are determinants of $n \times n$ sub-matrices of $N$ and thus $0$. Note that $[\overline{x_1},\dots,\overline{x_{n}}]\overline{N}=0$. As $\grade(\overline{I}) > 0$, the kernal of the dual of $N$ has rank $1$. Thus, for any fixed $j$, $[\overline{x_1},\dots,\overline{x_{n}}]$ and $[\overline{M_{1,j}},\dots,\overline{M_{n,j}}]$ are linearly dependent. Thus $\overline{x_{n}M_{i,j}}=\overline{x_iM_{n,j}}$, which gives $x(I_{n}(D)) + J_{s-1} =  a_{s}(I_{n}(D')) + J_{s-1}$. It follows that \begin{equation}
    x(\Fitt_0(I/\mathfrak{a})) + J_{s-1} = a_s(\Fitt_0(I/\mathfrak{a}')) + J_{s-1}.
\end{equation}

Now we show a similar relationship holds between the sums of $(n-2)$-residual intersections. First, note that letting $\{\nu_1,\dots,\nu_{n-3}\} \subseteq \{1,\dots,s-1\}$, we have $(a_{\nu_1},\dots,a_{\nu_{n-3}}):I \subseteq J_{s-1}$. Recall $a_s$ and $x$ are nonzero divisors on $R/(a_{\nu_1},\dots,a_{\nu_{n-3}}):I$. Thus, $$a_s((a_{\nu_1},\dots,a_{\nu_{n-3}},x):I)+J_{s-1} = x((a_{\nu_1},\dots,a_{\nu_{n-3}},a_s):I)+J_{s-1}.$$ So, we have

\begin{equation}
\begin{split}
    a_s\Bigg(\sum_{\{a_{\nu_1},\dots,a_{\nu_{n-3}}\} \subseteq \{a_1,\dots,a_{s-1}\}} (a_{\nu_1},\dots,a_{\nu_{n-3}},x):I \Bigg) + J_{s-1} =\\
    x \Bigg(\sum_{\{a_{\nu_1},\dots,a_{\nu_{n-3}}\} \subseteq \{a_1,\dots,a_{s-1}\}} (a_{\nu_1},\dots,a_{\nu_{n-3}},a_s):I \Bigg) + J_{s-1}.
\end{split}
\end{equation}

Thus, equation (2) holds by (3) and (4).

By induction on $s$, we have

$$J_{s-1} = \Fitt_0(I/\mathfrak{a}_{s-1}) + \sum_{\{a_{\nu_1},\dots,a_{\nu_{n-2}}\} \subseteq \{a_1,\dots,a_{s-1}\}} (a_{\nu_1},\dots,a_{\nu_{n-2}}):I.$$

Likewise, by decreasing induction on $t$, we have

$$\mathfrak{a}':I = \Fitt_0(I/\mathfrak{a}') + \sum_{\{a_{\nu_1},\dots,a_{\nu_{n-2}}\} \subseteq \{a_1,\dots,a_{s-1},x\}} (a_{\nu_1},\dots,a_{\nu_{n-2}}):I,$$

so
\begin{equation}
    \mathfrak{a}':I + J_{s-1} = \Fitt_0(I/\mathfrak{a}') + \sum_{\{a_{\nu_1},\dots,a_{\nu_{n-3}}\} \subseteq \{a_1,\dots,a_{s-1}\}} (a_{\nu_1},\dots,a_{\nu_{n-3}},x):I \: + J_{s-1}.
\end{equation}

Putting equations (1), (2) and (5) together, we have:
\begin{eqnarray*}
x(\mathfrak{a}:I)+J_{s-1} &=& a_s(\mathfrak{a}':I)+J_{s-1}\\
&=& a_s\Bigg(\Fitt_0(I/\mathfrak{a}') + \sum_{\{a_{\nu_1},\dots,a_{\nu_{n-3}}\} \subseteq \{a_1,\dots,a_{s-1}\}} (a_{\nu_1},\dots,a_{\nu_{n-3}},x):I \Bigg) + J_{s-1} \\
&=& x \Bigg(\Fitt_0(I/\mathfrak{a}) + \sum_{\{a_{\nu_1},\dots,a_{\nu_{n-3}}\} \subseteq \{a_1,\dots,a_{s-1}\}} (a_{\nu_1},\dots,a_{\nu_{n-3}},a_s):I \Bigg) + J_{s-1}.
\end{eqnarray*}

As $\overline{x}$ is a non-zerodivisor, $$\mathfrak{a}:I  \: + J_{s-1} = \Fitt_0(I/\mathfrak{a})+ \sum_{\{a_{\nu_1},\dots,a_{\nu_{n-3}}\} \subseteq \{a_1,\dots,a_{s-1}\}} (a_{\nu_1},\dots,a_{\nu_{n-3}},a_s):I+J_{s-1}.$$

What we have left to do is "get rid of" $J_{s-1}$ on both sides. First, note $J_{s-1} \subseteq \mathfrak{a}:I$. Moreover, since $I/\mathfrak{a}_{s-1}$ maps onto $I/\mathfrak{a}$, $\Fitt_0(I/\mathfrak{a}_{s-1}) \subset \Fitt_0(I/\mathfrak{a})$. Thus, we have:
\begin{eqnarray*}
\mathfrak{a}:I
&=& \Fitt_0(I/\mathfrak{a}) + \sum_{\{a_{\nu_1},\dots,a_{\nu_{n-3}}\} \subseteq \{a_1,\dots,a_{s-1}\}} (a_{\nu_1},\dots,a_{\nu_{n-3}},a_s):I + \\
&&\sum_{\{a_{\nu_1},\dots,a_{\nu_{n-2}}\} \subseteq \{a_1,\dots,a_{s-1}\}} (a_{\nu_1},\dots,a_{\nu_{n-2}}):I\\
&=& \Fitt_0(I/\mathfrak{a}) + \sum_{\{a_{\nu_1},\dots,a_{\nu_{n-2}}\} \subseteq \{a_1,\dots,a_s\}} (a_{\nu_1},\dots,a_{\nu_{n-2}}):I.
\end{eqnarray*}
\end{proof}

\section{Applications}

We begin our applications by showing that the main theorem provides an alternative proof for some already known results. Our first corollary was previously proven in \cite{HU}.

\begin{cor}
Let $(R,m,k)$ be a local Cohen-Macaulay ring with $|k| = \infty$. Let $I$ be a proper $R$-ideal and  $\mathfrak{a} \subsetneq I$. Suppose $I$ is a complete intersection. Then $$\mathfrak{a}:I = \Fitt_0(I/\mathfrak{a}) + \mathfrak{a}.$$
\end{cor}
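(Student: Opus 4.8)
The plan is to derive the corollary from Theorem 2.5 by showing that, for a complete intersection, every lower residual intersection appearing in that formula collapses into $\mathfrak{a}$. Write $g=\height(I)$; since $I$ is a complete intersection, $\mu(I)=g$. We may assume $\mu(I/\mathfrak{a})\ge 2$ (otherwise Lemma 2.1 gives $\mathfrak{a}:I=\Fitt_0(I/\mathfrak{a})$, and, since $\mathfrak{a}\subseteq\mathfrak{a}:I$, the identity follows), and that $\mathfrak{a}:I$ is an $s$-residual intersection of $I$ — the standing setting of the paper — so that Theorem 2.5 can be brought to bear; in particular this then forces $g=\mu(I)\ge 2$.

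The one non-formal ingredient I would isolate first is: \emph{if $\mathfrak{c}\subseteq I$ is generated by at most $g-2$ elements, then $\mathfrak{c}:I=\mathfrak{c}$.} Since $R$ is Cohen--Macaulay and $\mathfrak{c}$ needs at most $g-2$ generators, $\depth(R/\mathfrak{c})\ge\dim R-(g-2)$, so every $\mathfrak{p}\in\operatorname{Ass}(R/\mathfrak{c})$ satisfies $\height\mathfrak{p}\le g-2<g=\height(I)$. Thus no associated prime of $R/\mathfrak{c}$ contains $I$, and prime avoidance furnishes a nonzerodivisor on $R/\mathfrak{c}$ lying in $I$; hence $0:_{R/\mathfrak{c}}I=0$, i.e.\ $\mathfrak{c}:I=\mathfrak{c}$.

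Next I would verify the hypotheses of Theorem 2.5. For $\mathfrak{p}\in V(I)$ a regular sequence generating $I$ remains regular in $R_\mathfrak{p}$, so $\mu(I_\mathfrak{p})=g\le\height\mathfrak{p}=\dim R_\mathfrak{p}$, and $I$ satisfies $G_s$ for every $s$. A complete intersection is strongly Cohen--Macaulay (its Koszul complex resolves $R/I$), hence satisfies $AN_s$ for all $s$ by \cite[Theorem 4.5]{CNT}, so $I$ is weakly $(s-2)$-residually $S_2$ for all $s$. If $\mathfrak{a}$ is generated by at most $g-2$ elements, the ingredient above gives $\mathfrak{a}:I=\mathfrak{a}$, and since $\Fitt_0(I/\mathfrak{a})\subseteq\ann(I/\mathfrak{a})=\mathfrak{a}$ we are done. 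Otherwise $s\ge g-1\ge\mu(I)-2$, so for a generating sequence $a_1,\dots,a_s$ of $\mathfrak{a}$ as in Lemma 2.3, Theorem 2.5 gives
$$\mathfrak{a}:I=\Fitt_0(I/\mathfrak{a})+\sum_{\{a_{\nu_1},\dots,a_{\nu_{g-2}}\}\subseteq\{a_1,\dots,a_s\}}(a_{\nu_1},\dots,a_{\nu_{g-2}}):I.$$
Every ideal $(a_{\nu_1},\dots,a_{\nu_{g-2}})$ is generated by $g-2$ elements and contained in $\mathfrak{a}\subseteq I$, so by the ingredient above $(a_{\nu_1},\dots,a_{\nu_{g-2}}):I=(a_{\nu_1},\dots,a_{\nu_{g-2}})\subseteq\mathfrak{a}$. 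Substituting, $\mathfrak{a}:I\subseteq\Fitt_0(I/\mathfrak{a})+\mathfrak{a}$, and the reverse inclusion holds trivially.

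I expect no deep obstacle: all of the content sits in the short ``ingredient'' of the second paragraph, and the rest is checking the hypotheses of Theorem 2.5 together with the bookkeeping of the case split — in particular treating $\mathfrak{a}$ generated by at most $\mu(I)-2$ elements separately, since there the sum in Theorem 2.5 degenerates and contributes nothing new.
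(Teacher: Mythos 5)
Your route is the intended one (check that a complete intersection is $G_s$ and strongly Cohen--Macaulay, hence weakly residually $S_2$; apply Lemma 2.3 and Theorem 2.5; collapse the $(g-2)$-fold colon ideals into $\mathfrak{a}$), and your reading that $\mathfrak{a}:I$ is implicitly assumed to be an $s$-residual intersection is correct. However, the justification of the one step you yourself identify as carrying all the content is wrong, and the ``ingredient'' is false as you state it. From $\mu(\mathfrak{c})\le g-2$ alone you cannot conclude $\depth(R/\mathfrak{c})\ge \dim R-(g-2)$: that inequality amounts to $\operatorname{pd}_R(R/\mathfrak{c})\le\mu(\mathfrak{c})$, which fails in general (by Bruns' three-generator theorem there exist $3$-generated ideals of arbitrarily large projective dimension, hence with $\depth R/\mathfrak{c}=0$ in a regular local ring of any dimension). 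Nor does Krull's height theorem bound the heights of \emph{associated} primes of $R/\mathfrak{c}$ --- only of minimal primes; embedded primes can have large height and can contain $I$. Concretely, take $R$ regular local of dimension $g\ge 5$, $I=m$ (a complete intersection of height $g$), and $\mathfrak{c}\subseteq m$ a $3$-generated ideal with $\depth R/\mathfrak{c}=0$: then $\mu(\mathfrak{c})\le g-2$ but $\mathfrak{c}:I\supsetneq\mathfrak{c}$. So the assertion ``every $\mathfrak{p}\in\operatorname{Ass}(R/\mathfrak{c})$ has height at most $g-2$'' is a genuine gap.

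The repair is available from hypotheses you already have but did not use: the ideals to which you apply the ingredient are not arbitrary $(g-2)$-generated ideals. For $\mathfrak{c}=(a_{\nu_1},\dots,a_{\nu_i})$ with the $a_j$ chosen as in Lemma 2.3, condition (1) gives $\height(\mathfrak{c}:I)\ge i$ (and in your separate case $\mathfrak{c}=\mathfrak{a}$, the residual-intersection hypothesis gives $\height(\mathfrak{a}:I)\ge s\ge\mu(\mathfrak{a})$). Since $(\mathfrak{c}:I)\,I\subseteq\mathfrak{c}$, every prime containing $\mathfrak{c}$ contains $\mathfrak{c}:I$ or $I$, so $\height\mathfrak{c}\ge\min\{i,g\}=i$; an ideal of height $i$ generated by $i$ elements in the Cohen--Macaulay local ring $R$ is generated by a regular sequence, so $R/\mathfrak{c}$ is Cohen--Macaulay and unmixed of height $i\le g-2<g=\height I$. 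Hence no associated prime of $R/\mathfrak{c}$ contains $I$, prime avoidance gives a nonzerodivisor on $R/\mathfrak{c}$ inside $I$, and $\mathfrak{c}:I=\mathfrak{c}$, which is exactly what you need. With this correction the rest of your argument goes through and coincides with the paper's (one-line) proof via Lemma 2.3 and Theorem 2.5, the collapsing step being the same one the paper spells out in the proof of Corollary 3.2.
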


\begin{proof}
This follows from Lemma 2.3 and Theorem 2.5.
\end{proof}

The following corollary was also proven, without the generic complete intersection assumption but with a stronger assumption on the depth of $R/I$ in \cite[Corollary 5.5]{BH}.

\begin{cor}
Let $(R,m,k)$ be a local Cohen-Macaulay ring with $|k| = \infty$. Let $I$ be a proper $R$-ideal and  $\mathfrak{a} \subsetneq I$. Suppose $I$ is an almost complete intersection and generically a complete intersection and that $\depth R/I \geq \dim R/I - 1$. Let $\mathfrak{a} : I$  be an $s$-residual intersection. Then $\mathfrak{a}:I = \Fitt_0(I/\mathfrak{a}) + \mathfrak{a}$.
\end{cor}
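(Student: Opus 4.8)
The plan is to deduce the statement from Theorem 2.5. Put $g=\height(I)$; since $I$ is an almost complete intersection, $n:=\mu(I)=g+1$, so $n-2=g-1$. I would first dispose of the degenerate range $s<g$: there $\mathfrak a$ is generated by $s$ elements and $\height(\mathfrak a:I)\geq s$ by definition of an $s$-residual intersection, so the ``collapsing'' argument given below (applied with $\mathfrak c=\mathfrak a$) yields $\mathfrak a:I=\mathfrak a$, and since $\Fitt_0(I/\mathfrak a)\subseteq\ann(I/\mathfrak a)=\mathfrak a:I=\mathfrak a$ the asserted identity follows at once. So I may assume $s\geq g=n-1>n-2$, which supplies the hypothesis $s\geq n-2$ needed to invoke Theorem 2.5.

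Next I would verify the two structural hypotheses of Theorem 2.5. For $G_s$: if $\mathfrak p\in V(I)$ is minimal over $I$, then since $I$ is generically a complete intersection $I_{\mathfrak p}$ is generated by a regular sequence of length $\dim R_{\mathfrak p}$, so $\mu(I_{\mathfrak p})=\dim R_{\mathfrak p}$; if instead $\mathfrak p\in V(I)$ is not minimal over $I$, then $\dim R_{\mathfrak p}\geq g+1=\mu(I)\geq\mu(I_{\mathfrak p})$. Hence $I$ satisfies $G_s$ for all $s$. For the remaining hypothesis, the assumption $\depth R/I\geq\dim R/I-1$ says exactly that $I$ is almost Cohen-Macaulay, so — being also a generic complete intersection and an almost complete intersection — $I$ satisfies $AN_s^-$ for every $s$ by \cite[Theorem 3.3]{HVV}, which is stronger than being weakly $(s-2)$-residually $S_2$.

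I would then apply Lemma 2.3 to obtain a generating sequence $a_1,\dots,a_s$ of $\mathfrak a$ with the properties listed there (legitimate, since $\mathfrak a\subsetneq I$, $I$ satisfies $G_s$, $\mathfrak a:I$ is an $s$-residual intersection, and $\mu((\mathfrak a+mI)/mI)\leq\mu(\mathfrak a)\leq s$), and invoke Theorem 2.5 to write
\[
\mathfrak a:I=\Fitt_0(I/\mathfrak a)+\sum_{\{a_{\nu_1},\dots,a_{\nu_{g-1}}\}\subseteq\{a_1,\dots,a_s\}}(a_{\nu_1},\dots,a_{\nu_{g-1}}):I .
\]
The key step is the following collapsing claim: if $\mathfrak c\subseteq I$ is generated by $k$ elements with $k<g$ and $\height(\mathfrak c:I)\geq k$, then $\mathfrak c:I=\mathfrak c$. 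To prove it I would first note that no minimal prime of $\mathfrak c$ contains $I$, since such a prime would have height $\geq g>k$ yet be minimal over an ideal generated by $k$ elements; next, no minimal prime $\mathfrak p$ of $\mathfrak c$ can have height $<k$, since then, using $I\not\subseteq\mathfrak p$, we would get $(\mathfrak c:I)_{\mathfrak p}=\mathfrak c_{\mathfrak p}\subsetneq R_{\mathfrak p}$, whence $\mathfrak c:I\subseteq\mathfrak p$ and $\height(\mathfrak c:I)<k$, contrary to hypothesis. Thus every minimal prime of $\mathfrak c$ has height exactly $k$; since $\mathfrak c$ is generated by $k$ elements in the Cohen-Macaulay ring $R$ it is then a complete intersection, so $R/\mathfrak c$ is Cohen-Macaulay and hence has no embedded primes, and its associated primes — the minimal primes of $\mathfrak c$ — do not contain $I$. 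Therefore the image of $I$ in $R/\mathfrak c$ contains a nonzerodivisor, so $\ann_{R/\mathfrak c}(I/\mathfrak c)=0$, i.e. $\mathfrak c:I=\mathfrak c$. Applying this with $\mathfrak c=(a_{\nu_1},\dots,a_{\nu_{g-1}})$, which has $g-1<g$ generators and satisfies $\height(\mathfrak c:I)\geq g-1$ by condition (1) of Lemma 2.3, each summand above collapses to $(a_{\nu_1},\dots,a_{\nu_{g-1}})$.

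Finally I would assemble the chain
\begin{align*}
\mathfrak a:I&=\Fitt_0(I/\mathfrak a)+\sum_{\{a_{\nu_1},\dots,a_{\nu_{g-1}}\}\subseteq\{a_1,\dots,a_s\}}(a_{\nu_1},\dots,a_{\nu_{g-1}})\\
&\subseteq\Fitt_0(I/\mathfrak a)+\mathfrak a\subseteq\ann(I/\mathfrak a)+\mathfrak a=\mathfrak a:I ,
\end{align*}
where the first inclusion uses $(a_{\nu_1},\dots,a_{\nu_{g-1}})\subseteq\mathfrak a$ and the second uses $\Fitt_0(I/\mathfrak a)\subseteq\ann(I/\mathfrak a)$ together with $\mathfrak a\subseteq\mathfrak a:I$; equality throughout then gives $\mathfrak a:I=\Fitt_0(I/\mathfrak a)+\mathfrak a$. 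The step I expect to require the most care is the collapsing claim, along with confirming that the ``almost Cohen-Macaulay'' and ``generic complete intersection'' hypotheses are exactly what supply the two inputs — $G_s$, and being weakly $(s-2)$-residually $S_2$ — demanded by Theorem 2.5; the remainder is bookkeeping around that theorem.
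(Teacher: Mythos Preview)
Your proposal is correct and follows essentially the same route as the paper: verify the hypotheses of Theorem 2.5 via \cite{HVV} (the paper reaches $AN_s^-$ through the $d$-sequence characterization and \cite[Theorem 3.4.c]{HVV} rather than \cite[Theorem 3.3]{HVV}, but this is a cosmetic difference), apply Lemma 2.3 and Theorem 2.5, and then observe that each $(a_{\nu_1},\dots,a_{\nu_{g-1}}):I$ collapses to $(a_{\nu_1},\dots,a_{\nu_{g-1}})$ because $g-1<\height(I)$. Your treatment is simply more explicit---you spell out the $G_s$ check, the $s<g$ case, and the collapsing argument---where the paper leaves these as one-line remarks.
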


\begin{proof}
We may assume that $s \geq \height(I)$ and that $\mu(I) = \height(I)+1$. Since $I$ is generically a complete intersection and an almost complete intersection, $I$ is generated by a $d$-sequence. Thus, we can apply \cite[Theorem 3.4.c]{HVV} to get that $I$ satisfies $AN_s^-$. Thus, we can apply  Lemma 2.3 and Theorem 2.5. So, we have $$\mathfrak{a}:I = \Fitt_0(I/\mathfrak{a}) + \sum_{\{a_{\nu_1},\dots,a_{\nu_{g-1}}\} \subseteq \{a_1,\dots,a_s\}} (a_{\nu_1},\dots,a_{\nu_{g-1}}):I.$$
Note, since $g-1 < \height(I)$, $(a_{\nu_1},\dots,a_{\nu_{g-1}}):I = (a_{\nu_1},\dots,a_{\nu_{g-1}})$. Thus, $$\sum_{\{a_{\nu_1},\dots,a_{\nu_{g-1}}\} \subseteq \{a_1,\dots,a_s\}} (a_{\nu_1},\dots,a_{\nu_{g-1}}):I = \mathfrak{a}.$$
\end{proof}

Now, we have the applications of our theorem to previously unknown cases.

\begin{cor} Let $(R,m,k)$ be a local Gorenstein ring with $|k| = \infty$. Let $I$ be a proper $R$-ideal such that $R/I$ is  Cohen-Macaulay and let $\mathfrak{a} \subsetneq I$. Let $\height(I) = g$, $\mu(I) = g+2$, $s \geq g$, and assume $I$ satisfies $G_s$. Let $\mathfrak{a} : I$ be an $s$-residual intersection.  For any generating sequence $a_1, \dots, a_s$ of $\mathfrak{a}$ as in Lemma 2.3 one has $$\mathfrak{a}:I = \Fitt_0(I/\mathfrak{a}) + \sum_{\{a_{\nu_1},\dots,a_{\nu_{g}}\} \subseteq \{a_1,\dots,a_s\}} (a_{\nu_1},\dots,a_{\nu_{g}}):I$$

where each $(a_{\nu_1},\dots,a_{\nu_{g}}):I$ is a link.
\end{cor}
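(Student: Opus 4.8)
The plan is to deduce the displayed equality directly from Theorem 2.5 and then to recognize each of the indexed summands as a link. First I would confirm the hypotheses of Theorem 2.5. Since $R$ is Gorenstein it is Cohen-Macaulay; setting $n = \mu(I) = g+2$, the hypothesis $s \ge g$ reads $s \ge n-2$, and $I$ satisfies $G_s$ while $\mathfrak{a}:I$ is an $s$-residual intersection by assumption. The only condition needing work is that $I$ be weakly $(s-2)$-residually $S_2$. This is precisely where $\mu(I)=g+2$ and the Cohen-Macaulayness of $R/I$ enter: $I$ is then a Cohen-Macaulay almost almost complete intersection ideal in a Gorenstein local ring, hence strongly Cohen-Macaulay by \cite[page 259]{AH}, hence satisfies $AN_t$ for every $t$ by \cite[Theorem 4.5]{CNT}; in particular it satisfies $AN_{s-2}$, which implies it is $(s-2)$-residually $S_2$ and, a fortiori, weakly $(s-2)$-residually $S_2$. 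Applying Theorem 2.5 (with $n-2 = g$) yields the displayed formula.

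It remains to see that for each subset $\{a_{\nu_1},\dots,a_{\nu_g}\} \subseteq \{a_1,\dots,a_s\}$ the residual intersection $(a_{\nu_1},\dots,a_{\nu_g}):I$ is a link of $I$, the natural candidate for the connecting complete intersection being $\mathfrak{c} := (a_{\nu_1},\dots,a_{\nu_g})$. Since $\mathfrak{c}\subseteq I$ and $\mathfrak{c}\subseteq \mathfrak{c}:I$ one has $V(\mathfrak{c}) \supseteq V(I)\cup V(\mathfrak{c}:I)$, and since $I\,(\mathfrak{c}:I)\subseteq \mathfrak{c}$ one has $V(\mathfrak{c}) \subseteq V(I)\cup V(\mathfrak{c}:I)$; hence $\height \mathfrak{c} = \min\{\height I,\ \height(\mathfrak{c}:I)\}$. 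By condition (1) of Lemma 2.3 (applied with $i=g\le s$), $\height(\mathfrak{c}:I)\ge g$, so $\height\mathfrak{c} = g$. Because $R$ is Cohen-Macaulay and $\mathfrak{c}$ is generated by $g$ elements of height $g$, the sequence $a_{\nu_1},\dots,a_{\nu_g}$ is a regular sequence, so $\mathfrak{c}$ is a complete intersection of height $g$; moreover $\mathfrak{c}\subsetneq I$ since $\mu(I)=g+2>g$, so both $I$ and $\mathfrak{c}:I$ are proper.

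Finally I would invoke classical linkage: $R/I$ is Cohen-Macaulay, so $I$ is unmixed of height $g$, and $\mathfrak{c}\subseteq I$ is a complete intersection of the same height $g$ in the Gorenstein ring $R$; by the theorem of Peskine and Szpiro \cite{PS} the ideals $I$ and $\mathfrak{c}:I$ are linked via $\mathfrak{c}$, that is $\mathfrak{c}:(\mathfrak{c}:I) = I$. Thus each summand $(a_{\nu_1},\dots,a_{\nu_g}):I$ is a link of $I$, and together with the formula supplied by Theorem 2.5 this is exactly the assertion.

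I expect the only genuinely substantive point to be the identification of $I$ as strongly Cohen-Macaulay, which supplies the ``weakly $(s-2)$-residually $S_2$'' hypothesis of Theorem 2.5; the remainder is bookkeeping --- the height computation via Lemma 2.3(1) showing $\mathfrak{c}$ is a complete intersection, and the unmixedness of $I$ coming from $R/I$ Cohen-Macaulay showing that the residual intersections are honest links rather than degenerate ones.
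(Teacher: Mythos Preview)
Your proof is correct and follows essentially the same route as the paper: deduce strong Cohen--Macaulayness of $I$ from \cite{AH}, use this to get the $AN$ (hence weakly residually $S_2$) condition needed for Theorem 2.5, and then invoke unmixedness of $I$ together with \cite{PS} to identify each $(a_{\nu_1},\dots,a_{\nu_g}):I$ as a link. The only cosmetic differences are that the paper cites \cite{H} rather than \cite{CNT} for the passage from strongly Cohen--Macaulay to $AN_s$, and that you spell out in more detail (via Lemma 2.3(1) and the height computation) why $a_{\nu_1},\dots,a_{\nu_g}$ is a regular sequence, which the paper leaves implicit.
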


\begin{proof} Note that, as $R$ is Gorenstein and as $\mu(I) = g+2$ and $R/I$ Cohen-Macaulay, by \cite{AH} we have that $I$ is strongly Cohen-Macaulay. Since $I$ is strongly Cohen-Macaulay and satisfies $G_s$, $I$ satisfies $AN_s$ \cite{H}. Thus, we can apply Lemma 2.3 and Theorem 2.5 to get the equality. Further, as $R/I$ is Cohen-Macaulay, $I$ is unmixed. Since $I$ is unmixed and $R$ is Gorenstein, by \cite{PS} we have that each $(a_{\nu_1},\dots,a_{\nu_{g}}):I$ is a link.
\end{proof}

Here, we restate \cite[Corollary 2.18]{KMU} in the way in which we shall use it.

\begin{thm}
Let $R$ a local Gorenstein ring, let $I$ be an ideal of height $g>0$ such that $I$ is generically a complete intersections and assume that $R/I$ is Gorenstein. Let $\mathfrak{a}:I$ be a $(g+1)$-residual intersection of $I$. Then for any generating sequence $a_1,\dots,a_{g+1}$ of $\mathfrak{a}$ such that any length $g$ subsequence is a regular sequence, one has that
$$\mathfrak{a}:I = \sum_{\{a_{\nu_1},\dots,a_{\nu_{g}}\} \subseteq \{a_1,\dots,a_{g+1}\}} (a_{\nu_1},\dots,a_{\nu_{g}}):I$$

if and only if $\,\Ext_R^1(I/I^2,R/I) = 0$.
\end{thm}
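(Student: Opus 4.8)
Since this statement is, as the text announces, \cite[Corollary 2.18]{KMU} reformulated, the quickest plan is to invoke that result and then supply the dictionary. The one point to check is that the displayed sum really is a sum of links: every length-$g$ subsequence $a_{\nu_1},\dots,a_{\nu_g}$ is, by hypothesis, a regular sequence contained in $I$, and since $g=\height(I)$, the ideal $\mathfrak{c}=(a_{\nu_1},\dots,a_{\nu_g})$ is a complete intersection inside $I$ of height $\height(I)$; because $R/I$ is Gorenstein (hence unmixed), $I$ and $\mathfrak{c}:I$ are linked via $\mathfrak{c}$. Thus $\sum_{\{\nu_1,\dots,\nu_g\}}(a_{\nu_1},\dots,a_{\nu_g}):I$ is the sum of the $g+1$ links of $I$ cut out by the $g$-subsets of $a_1,\dots,a_{g+1}$, and both the asserted equality and its equivalence to $\Ext_R^1(I/I^2,R/I)=0$ are exactly \cite[Corollary 2.18]{KMU} once one checks its hypotheses ($R$ and $R/I$ Gorenstein, $\height I>0$, $I$ generically a complete intersection, $\mathfrak{a}:I$ a $(g+1)$-residual intersection with $\mathfrak{a}$ generated by $g+1$ elements whose length-$g$ subsequences are regular).

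For an independent argument I would start as follows. Put $K=\sum_{\{\nu_1,\dots,\nu_g\}}(a_{\nu_1},\dots,a_{\nu_g}):I$ and $J=\mathfrak{a}:I$; since each $(a_{\nu_1},\dots,a_{\nu_g})\subseteq\mathfrak{a}$ we get $K\subseteq J$ for free, so everything reduces to controlling the defect $D=J/K$. A localization argument shows $D$ is small: if $\mathfrak{p}\notin V(I)$ then $I_{\mathfrak p}=R_{\mathfrak p}$, hence $J_{\mathfrak p}=\mathfrak{a}_{\mathfrak p}=\sum(a_{\nu_1},\dots,a_{\nu_g})_{\mathfrak p}=K_{\mathfrak p}$, so $D$ is supported on $V(I)$; and since $IJ\subseteq\mathfrak{a}\subseteq K$, the module $D$ is defined over $R/I$. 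The target is then to exhibit a natural exact sequence relating $D$ to $\Ext_R^1(I/I^2,R/I)$ whose remaining terms are forced to vanish. (Compare Corollary~3.3, which proves the parallel formula $\mathfrak{a}:I=\Fitt_0(I/\mathfrak{a})+\sum(a_{\nu_1},\dots,a_{\nu_g}):I$ when $\mu(I)=g+2$; morally, the role of $\Ext_R^1(I/I^2,R/I)=0$ is to guarantee that the $\Fitt_0$ term is absorbed into the sum of links and to dispense with the constraint on $\mu(I)$.)

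To produce that exact sequence I would use linkage. Writing $\mathfrak{a}_i=(a_1,\dots,\widehat{a_i},\dots,a_{g+1})$, each $\mathfrak{a}_i$ links $I$ to $L_i=\mathfrak{a}_i:I$; since $R$ and $R/I$ are Gorenstein, Peskine--Szpiro gives $\omega_{R/L_i}\cong I/\mathfrak{a}_i$ and $L_i/\mathfrak{a}_i\cong\omega_{R/I}\cong R/I$, so $L_i=\mathfrak{a}_i+(y_i)$ with $\mathfrak{a}_i:_R y_i=I$. Because $\mathfrak{a}$ is an almost complete intersection of height $g$ that is generically a complete intersection in a Gorenstein (hence Cohen--Macaulay) ring, $R/\mathfrak{a}$ is almost Cohen--Macaulay and the Koszul complex on $a_1,\dots,a_{g+1}$ has homology vanishing in homological degree $\geq 2$. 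Threading this Koszul complex through the $g+1$ links $L_i$ against a free resolution of $R/I$ — that is, forming the appropriate double complex, equivalently an iterated mapping cone — one should land $D$ in an exact sequence whose other terms involve the conormal module $I/I^2$ and $R/I$; the generic complete intersection hypothesis kills the lower-dimensional contributions, and self-duality $\omega_{R/I}\cong R/I$ collapses what remains to $\Ext_R^1(I/I^2,R/I)$, giving $D=0$ iff $\Ext_R^1(I/I^2,R/I)=0$.

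The crux — and the main obstacle — is exactly this last identification: the bookkeeping that transports the single $(g+1)$-residual intersection $\mathfrak{a}:I$ to the sum of the $g+1$ links is delicate, and one must check that every auxiliary homology module appearing in the comparison genuinely vanishes, which is where the genericity of the generators $a_i$, the generic complete intersection property, and the Gorenstein hypothesis on $R/I$ are all really used. The preliminary reductions and the inclusion $K\subseteq J$ are routine.
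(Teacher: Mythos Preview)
The paper does not prove this statement at all: Theorem~3.4 is introduced with ``Here, we restate \cite[Corollary 2.18]{KMU} in the way in which we shall use it,'' and no proof is given. Your first paragraph, which invokes \cite[Corollary 2.18]{KMU} and checks the dictionary (in particular that each $(a_{\nu_1},\dots,a_{\nu_g}):I$ is a genuine link), is therefore already more than the paper supplies and is the correct way to handle the statement.

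Your subsequent sketch of an independent argument is interesting but, as you yourself note, the key step --- identifying the defect $D=J/K$ with $\Ext^1_R(I/I^2,R/I)$ via a double-complex or mapping-cone comparison --- is not carried out, and this is precisely the substantive content of the Kustin--Miller--Ulrich result. Since the paper makes no attempt at an independent proof, there is no divergence to compare; for the purposes of this paper your first paragraph suffices.
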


Now, we state our final application.

\begin{cor}
Let $(R,m,k)$ be a local Gorenstein ring with $|k| = \infty$. Let $I$ be a proper $R$-ideal such that $R/I$ is Gorenstein and let $\mathfrak{a} \subsetneq I$. Suppose $I$ is of height $g > 0$, $\mu(I) = g+3$, $\Ext^1_{R/I}(I/I^2,R/I) = 0$, and for some $s \geq g+1$, I is $G_{s}$ and weakly $(s-2)$-residually $S_2$. Let $\mathfrak{a}:I$ be an $s$-residual intersection. For any generating sequence $a_1, \dots, a_s$ of $\mathfrak{a}$ as in Lemma 2.3 one has $$\mathfrak{a}:I = \Fitt_0(I/\mathfrak{a}) + \sum_{\{a_{\nu_1},\dots,a_{\nu_{g}}\} \subseteq \{a_1,\dots,a_s\}} (a_{\nu_1},\dots,a_{\nu_{g}}):I$$

where each $(a_{\nu_1},\dots,a_{\nu_{g}}):I$ is a link.
\end{cor}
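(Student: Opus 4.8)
The statement is obtained by feeding Theorem 2.5 into Theorem 3.4; the only real work is checking that the residual intersections Theorem 2.5 produces satisfy the hypotheses of Theorem 3.4. So I would first apply Theorem 2.5. Here $R$ is Gorenstein, hence Cohen-Macaulay; $\mu(I)=n=g+3$, so the threshold $n-2=g+1$ is at most $s$; and $I$ is $G_s$, weakly $(s-2)$-residually $S_2$, with $\mathfrak a:I$ an $s$-residual intersection. Hence, for a generating sequence $a_1,\dots,a_s$ of $\mathfrak a$ as in Lemma 2.3,
$$\mathfrak a:I=\Fitt_0(I/\mathfrak a)+\sum_{\{a_{\nu_1},\dots,a_{\nu_{g+1}}\}\subseteq\{a_1,\dots,a_s\}}(a_{\nu_1},\dots,a_{\nu_{g+1}}):I.$$

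Next I would reduce each $(g+1)$-residual intersection $\mathfrak b:I$ with $\mathfrak b=(a_{\nu_1},\dots,a_{\nu_{g+1}})$ to a sum of links, by applying Theorem 3.4 to $I$ with $\mathfrak b$ in the role of $\mathfrak a$. This needs four checks. (i) $I$ is generically a complete intersection: since $R/I$ is Cohen-Macaulay (it is Gorenstein) and $R$ is catenary and equidimensional, every minimal prime $p$ of $I$ has $\dim R_p=\height I=g\le s-1$, so $G_s$ gives $\mu(I_p)\le g$, while $\height I_p=g$ forces $\mu(I_p)\ge g$; thus $I_p$ is a complete intersection. (ii) $\mathfrak b\subsetneq I$ because $\mu(I)=g+3>g+1$, and $\mathfrak b:I$ is a $(g+1)$-residual intersection by Lemma 2.3(1) together with the remark after Lemma 2.3. (iii) Any length-$g$ subsequence of $a_{\nu_1},\dots,a_{\nu_{g+1}}$ is a regular sequence: writing $\mathfrak c$ for the ideal it generates, $\mathfrak c:I$ is a $g$-residual intersection by the same remark, so $\height(\mathfrak c:I)\ge g$; and if a prime $p$ contains $\mathfrak c$ but not $I$ then $(\mathfrak c:I)_p=\mathfrak c_p\subseteq pR_p$, so $V(\mathfrak c)\subseteq V(I)\cup V(\mathfrak c:I)$ and hence $\height\mathfrak c\ge g$, which in the Cohen-Macaulay ring $R$ makes $\mathfrak c$ a regular sequence. (iv) The hypothesis $\Ext^1_{R/I}(I/I^2,R/I)=0$ supplies the $\Ext$-vanishing Theorem 3.4 asks for.

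Granting (i)--(iv), Theorem 3.4 gives, for each such $\mathfrak b$,
$$\mathfrak b:I=\sum_{\{a_{\mu_1},\dots,a_{\mu_g}\}\subseteq\{a_{\nu_1},\dots,a_{\nu_{g+1}}\}}(a_{\mu_1},\dots,a_{\mu_g}):I,$$
and each summand is a link of $I$: $(a_{\mu_1},\dots,a_{\mu_g})$ is a regular sequence of length $g=\height I$ contained in $I$, $R$ is Gorenstein, and $I$ is unmixed since $R/I$ is Cohen-Macaulay, so $I=(a_{\mu_1},\dots,a_{\mu_g}):\big((a_{\mu_1},\dots,a_{\mu_g}):I\big)$ by Peskine and Szpiro, exactly as in the proof of Corollary 3.3. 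Substituting these into the formula from Theorem 2.5, and noting that (because $s\ge g+1$) every $g$-subset of $\{a_1,\dots,a_s\}$ sits inside some $(g+1)$-subset and conversely, the iterated sum collapses to $\sum_{\{a_{\nu_1},\dots,a_{\nu_g}\}\subseteq\{a_1,\dots,a_s\}}(a_{\nu_1},\dots,a_{\nu_g}):I$, which is the asserted identity with each term a link.

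I do not expect a genuinely new difficulty: the corollary is just Theorem 2.5 composed with Theorem 3.4, so the substance is confirming the hypotheses line up. The least mechanical point is deducing the generic complete intersection property from $G_s$ rather than assuming it; one should also be mindful that the $\Ext$-vanishing actually used is over $R/I$ — matching the corollary's hypothesis — whereas Theorem 3.4 is phrased with $\Ext^1_R$, and there is only an evident injection $\Ext^1_{R/I}(I/I^2,R/I)\hookrightarrow\Ext^1_R(I/I^2,R/I)$, so the two formulations should be reconciled. The remaining risk is purely the index bookkeeping in the final collapse.
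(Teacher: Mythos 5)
Your proposal is correct and follows exactly the route the paper takes: its proof of this corollary is just the citation ``Lemma 2.3, Theorem 2.5 and Theorem 3.4,'' and you have filled in precisely those verifications (generic complete intersection from $G_s$, regular-sequence subsequences from Lemma 2.3(1), the Peskine--Szpiro link argument as in Corollary 3.3, and the collapse of the double sum). Your remark about reconciling $\Ext^1_{R/I}$ with the $\Ext^1_R$ in the statement of Theorem 3.4 is a fair observation about the paper's phrasing, not a gap in your argument.
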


\begin{proof}
This follows from Lemma 2.3 and Theorem 2.5 and Theorem 3.4.
\end{proof}

 Let $R$ be a Noetherian local ring, $I$ generically a complete intersection ideal and $A = R/I$, then $$\Ext^1_{R/I}(I/I^2,R/I) = T^2(A/R,A).$$ When $T^2(A/R,A)=0$, we say that $R$ is non-obstructed. If $R$  is regular this implies that there are no obstructions for lifting infinitesimal deformations. One should note that by \cite{B} if $I$ is a licci ideal then $T^2(A/R,A) = 0$.

 \section{Another Approach}

Given that our main theorem requires two condition, that the ideal $I$ is both $G_s$ and weakly $s$-residually $S_2$, a natural question arises - can we weaken either condition? Approaching the question of computing the generators of residual intersections using the methods of Bouca and Hassanzadeh \cite{BH}, a partial answer arises. It is possible to eliminate the $G_s$ condition, however, with this method the depth condition is strengthened from $I$ being weakly $s$-residually $S_2$ to $I$ satisfying the $SD_1$ condition. It has yet to be determined if it is possible to eliminate the $G_s$ condition without strengthening the depth condition.

For clarity, we recall the definition of $SD_k$ here.

\begin{defn}
Let $(R,m)$ be a Noetherian local ring of dimension $d$ and $I = (x_1,\dots,x_n)$ be an ideal of grade $g$. Let $k$ be an integer. We say that an ideal satisfies $SD_k$ if

$$\depth(H_i(x_1,\dots,x_n;R)) \geq \min\{d-g, d-n+i+k \}$$

for all $i \geq 0$.
\end{defn}

In the case where $s$ is small, the $G_s$ and weakly $(s-2)$-residually $S_2$ conditions are very weak, especially compared to the $SD_1$ condition. Thus, the methods of Section 3 have an advantage when $s$ is small.

Before we go over how to use the methods of Bouca and Hassanzadeh to derive an analogous result to our main theorem, we will restate a few of the relevant results and definitions from \cite{BH}.

The most relevant of these definitions is the definition of an ideal that Bouca and Hassanzadeh call the disguised residual intersection. While they have an alternate construction for this ideal, we will use an equivalent definition.

\begin{defn}\cite[Theorem 4.9]{BH} \; Let $R$ be a Noetherian ring, $I = (x_1, \dots, x_n)$ and $\mathfrak{a} = (a_1,\dots, a_s) \subseteq I$ be ideals of $R$. Let $B = (c_{ij})$ be an $n\times s$ matrix such that $[a_1,\dots,a_s] = [x_1,\dots,x_n] B$. Let $e_1,\dots,e_n$ be the basis of $K_1(x_1,\dots,x_n; R)$ as an $R$-module. Let $\zeta_j = \sum_{i=1}^n c_{ij}e_i$ and $\Gamma_{\bullet}$ be the R-subalgebra of $K_\bullet(x_1,\dots,x_n; R)$ generated by $\{\zeta_1, \dots, \zeta_s \}$. Let $Z_\bullet = Z_\bullet(x_1,\dots,x_n; R)$ be the R-subalgebra of Koszul cycles. Then the disguised residual intersection is the ideal satisfying $\;\Kitt(\mathfrak{a},I)\cdot e_1\wedge\dots\wedge e_n = \langle \Gamma_\bullet \cdot Z_\bullet \rangle_n$.

\end{defn}

One should note that in \cite[Section 4.2]{BH}, it was shown that the disguised residual intersection does not depend on any choice of generators or the matrix $B$.

\begin{thm} \cite[Theorem 4.23]{BH}
Let $R$ be a Noetherian ring, and keep the same notation as in Definition 4.1. Let $\grade(I) = g$. Let $\tilde{H}_\bullet$ be the R-subalgebra of $K_\bullet(x_1,\dots,x_n; R)$ generated by lifts of the generators of the Koszul homology modules such as to give Koszul cycles. Then

$$\Kitt(\mathfrak{a},I)\cdot e_1\wedge\dots\wedge e_n = \mathfrak{a}\cdot e_1\wedge\dots\wedge e_n + \sum_{i = \max \{0, n-s\}}^{n-g}\Gamma_{n-i} \cdot \tilde{H}_i. $$

\end{thm}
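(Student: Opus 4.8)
The plan is to work directly with the description $\Kitt(\mathfrak{a},I)\cdot e_1\wedge\dots\wedge e_n = \langle\Gamma_\bullet\cdot Z_\bullet\rangle_n$ from Definition 4.1 and analyze its degree-$n$ component. Since $\Gamma_\bullet$ and $Z_\bullet$ are both $R$-subalgebras of the graded-commutative algebra $K_\bullet = \Lambda^\bullet R^n$, the product $\Gamma_\bullet\cdot Z_\bullet$ is again a subalgebra, so $\langle\Gamma_\bullet\cdot Z_\bullet\rangle_n = \sum_{i=0}^n \Gamma_{n-i}\cdot Z_i$, where $\Gamma_{n-i}\cdot Z_i$ is the $R$-submodule of $K_n$ spanned by products $\gamma\wedge z$ with $\gamma\in\Gamma_{n-i}$, $z\in Z_i$. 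Because $\Gamma_\bullet$ is generated by the $s$ degree-one elements $\zeta_1,\dots,\zeta_s$ of an exterior algebra, $\Gamma_k = 0$ for $k>s$, so only $i\ge\max\{0,n-s\}$ contribute; this already accounts for the lower summation limit.

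Next I would decompose the cycles. Writing $B_i = \partial K_{i+1}$ for the boundaries, the exact sequence $0\to B_i\to Z_i\to H_i\to 0$ together with the choice of lifts defining $\tilde H_\bullet$ gives $Z_i = B_i + R\cdot\tilde H_i$: the degree-$i$ lift generators lie in $\tilde H_i$, while conversely $R\cdot\tilde H_i\subseteq Z_i$ since products of Koszul cycles are cycles. Hence $\sum_i\Gamma_{n-i}Z_i = \bigl(\sum_i\Gamma_{n-i}B_i\bigr) + \bigl(\sum_i\Gamma_{n-i}\tilde H_i\bigr)$. For $i>n-g$ the grade-sensitivity of the Koszul complex on elements generating an ideal of grade $g$ gives $H_i = 0$, so there $Z_i = B_i$ and the term $\Gamma_{n-i}\tilde H_i$ is already contained in $\Gamma_{n-i}B_i$; this collapses the $\tilde H$-sum to $\max\{0,n-s\}\le i\le n-g$. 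It then remains to identify $\sum_i\Gamma_{n-i}B_i$ with $\mathfrak{a}\cdot e_1\wedge\dots\wedge e_n$.

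The crux — and the step I expect to be the main obstacle — is this last identification. For $\supseteq$, note $\partial(e_1\wedge\dots\wedge e_n) = \sum_i(-1)^{i-1}x_i\,e_1\wedge\dots\widehat{e_i}\dots\wedge e_n\in B_{n-1}$, and a direct expansion gives $\zeta_j\wedge\partial(e_1\wedge\dots\wedge e_n) = a_j\,e_1\wedge\dots\wedge e_n$; since $\zeta_j\in\Gamma_1$ this shows $\mathfrak{a}\cdot e_1\wedge\dots\wedge e_n\subseteq\Gamma_1 B_{n-1}\subseteq\sum_i\Gamma_{n-i}B_i$. For $\subseteq$, take $\gamma\in\Gamma_{n-i}$ and $b = \partial\kappa$ with $\kappa\in K_{i+1}$; since $\gamma\wedge\kappa\in K_{n+1}=0$, the Leibniz rule for the Koszul differential gives $\gamma\wedge\partial\kappa = \pm(\partial\gamma)\wedge\kappa$. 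Because $\partial\zeta_j = a_j$, the differential carries $\Gamma_{n-i}$ into $\mathfrak{a}\,\Gamma_{n-i-1}$, so $(\partial\gamma)\wedge\kappa\in\mathfrak{a}\cdot(\Gamma_{n-i-1}\wedge K_{i+1})\subseteq\mathfrak{a}\cdot K_n = \mathfrak{a}\cdot e_1\wedge\dots\wedge e_n$, the last equality since $K_n$ is free of rank one on $e_1\wedge\dots\wedge e_n$. Combining gives $\sum_i\Gamma_{n-i}B_i = \mathfrak{a}\cdot e_1\wedge\dots\wedge e_n$, and substituting back yields the asserted formula. The only delicate points are the sign bookkeeping in the Leibniz rule (immaterial for the ideal-theoretic conclusion) and checking that the vanishing ranges for $\Gamma_k$ and $H_i$ exactly match the stated summation limits.
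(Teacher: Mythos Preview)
The paper does not supply its own proof of this statement: Theorem~4.3 is quoted verbatim from \cite[Theorem~4.23]{BH} and is used as a black box. So there is no argument in the present paper to compare your proposal against.

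That said, your proof sketch stands on its own and is essentially correct. The decomposition $\langle\Gamma_\bullet\cdot Z_\bullet\rangle_n=\sum_i\Gamma_{n-i}Z_i$, the vanishing $\Gamma_k=0$ for $k>s$, and the depth-sensitivity $H_i=0$ for $i>n-g$ pin down the summation range as claimed. The key identity $\sum_i\Gamma_{n-i}B_i=\mathfrak{a}\cdot e_1\wedge\dots\wedge e_n$ is handled cleanly: the Leibniz argument using $K_{n+1}=0$ and $\partial\zeta_j=a_j$ is exactly the right mechanism, and your computation $\zeta_j\wedge\partial(e_1\wedge\dots\wedge e_n)=a_j\,e_1\wedge\dots\wedge e_n$ gives the reverse inclusion. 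One small notational redundancy: you write $Z_i=B_i+R\cdot\tilde H_i$, but $\tilde H_i$ is already an $R$-module, so this is just $Z_i=B_i+\tilde H_i$; the inclusion $\tilde H_i\subseteq Z_i$ holds because products of cycles are cycles, and the reverse follows since the lifts of the generators of $H_i$ lie in $\tilde H_i$. This is, in outline, the natural way to extract the formula from Definition~4.2, and is in the same spirit as the original argument in \cite{BH}.
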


\smallskip

\begin{prop}
\cite[Proposition 4.19]{BH}
Let $R$ be a commutative ring and keep the notation of Definition 4.1. Then, we have

$$\langle \Gamma_\bullet \cdot \langle Z_1(x_1,\dots,x_n;R)\rangle\rangle_n = \Fitt_0(I/\mathfrak{a})\cdot e_1\wedge\dots\wedge e_n.$$
\end{prop}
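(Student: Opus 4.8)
The plan is to rewrite both sides of the claimed identity inside $\bigwedge^n K_1(x_1,\dots,x_n;R)=R\cdot e_1\wedge\dots\wedge e_n$ as $R$-modules generated by $n$-fold wedge products of explicit vectors in $R^n\cong K_1(x_1,\dots,x_n;R)$, and then to match them using the standard determinantal description of $\Fitt_0$ together with multilinearity of the determinant.

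First I would record a presentation of $I/\mathfrak a$. The generators $x_1,\dots,x_n$ map onto $I/\mathfrak a$, and $(r_1,\dots,r_n)\in R^n$ is a relation on them exactly when $\sum_i r_ix_i\in\mathfrak a$, i.e. $\sum_i r_ix_i=\sum_j s_ja_j=\sum_i\bigl(\sum_j c_{ij}s_j\bigr)x_i$ for some $s\in R^{s}$; equivalently $(r_1,\dots,r_n)-Bs\in Z_1(x_1,\dots,x_n;R)$, where we identify $Z_1$ with its image in $R^n$ under $K_1\cong R^n$. Hence the relation module is $L:=Z_1+C$, with $C\subseteq R^n$ the submodule spanned by the columns of $B$. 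Using the standard description of $\Fitt_0$ — for a module with $n$ chosen generators, $\Fitt_0$ is generated by the determinants $\det[v_1\mid\dots\mid v_n]$ as $v_1,\dots,v_n$ range over the relation module — and the identification under which $v_1\wedge\dots\wedge v_n=\det[v_1\mid\dots\mid v_n]\,e_1\wedge\dots\wedge e_n$, one gets
\[
\Fitt_0(I/\mathfrak a)\cdot e_1\wedge\dots\wedge e_n=\bigl\langle\, v_1\wedge\dots\wedge v_n \;:\; v_1,\dots,v_n\in L \,\bigr\rangle_R .
\]

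Next I would unwind the left-hand side of the proposition. Since $\Gamma_\bullet$ is the exterior subalgebra generated by $\zeta_1,\dots,\zeta_s$ and $\langle Z_1\rangle$ the one generated by the $1$-cycles, the degree-$n$ component $\langle\Gamma_\bullet\cdot\langle Z_1\rangle\rangle_n$ is spanned by products $\zeta_{j_1}\wedge\dots\wedge\zeta_{j_p}\wedge z_1\wedge\dots\wedge z_q$ with $p+q=n$ and $z_l\in Z_1$ (and, automatically, $p\le s$, which is why Koszul degrees below $n-s$ never occur — matching the lower summation index in Theorem 4.2). Reordering wedge factors up to sign, this span equals $\langle v_1\wedge\dots\wedge v_n : v_1,\dots,v_n\in\{\zeta_1,\dots,\zeta_s\}\cup Z_1\rangle_R$. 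Viewing $\zeta_j$ as the $j$-th column of $B$, the $R$-span of $\{\zeta_1,\dots,\zeta_s\}\cup Z_1$ is exactly $C+Z_1=L$ (here $Z_1$ is already an $R$-submodule, so it contributes no extra coefficients). By multilinearity of the determinant in its columns, allowing $v_1,\dots,v_n$ to range over all of $L$ does not enlarge the span: expanding each $v_i$ as an $R$-combination of elements of $\{\zeta_j\}\cup Z_1$ and distributing, each term is again an $n$-fold wedge of such generators, and conversely every such wedge already lies in the $L$-span. Comparing with the first display yields the asserted equality.

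The presentation of $I/\mathfrak a$ and the determinant bookkeeping are routine. The one point deserving care is the reduction from "wedges of generators'' to "wedges of arbitrary elements of $L$'': it relies on $Z_1$ being an $R$-module and on the fact that a wedge with a repeated column vanishes, so that distributing an $R$-linear expansion never produces anything outside the $R$-span of honest $n$-fold wedges of elements of $L$, and hence nothing outside $\Fitt_0(I/\mathfrak a)\cdot e_1\wedge\dots\wedge e_n$. I expect this mild multilinearity argument — keeping both inclusions under control at once — to be the only real obstacle.
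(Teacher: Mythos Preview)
The paper does not prove this proposition; it is quoted directly from \cite[Proposition 4.19]{BH} and stated without argument. So there is no proof in the paper to compare against.

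Your argument is correct and self-contained. The key step is the identification of the relation module of $I/\mathfrak a$ (with respect to the generators $x_1,\dots,x_n$) as $L=Z_1+\operatorname{im}(B)$; once that is in hand, both sides of the asserted equality are the image of $\bigwedge^n L$ in $\bigwedge^n R^n\cong R\cdot e_1\wedge\dots\wedge e_n$, the right-hand side by the determinantal description of $\Fitt_0$ and the left-hand side by multilinearity of the wedge product. The only point needing care is exactly the one you flagged: passing between wedges of generators $\{\zeta_1,\dots,\zeta_s\}\cup Z_1$ and wedges of arbitrary elements of $L$. Your expansion argument handles this, and the constraint $p\le s$ falls out automatically since repeated $\zeta_j$'s give zero.
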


\smallskip

\begin{rem} \cite[Remark 4.24]{BH} With regards to inclusions, $\Fitt_0(I/\mathfrak{a}) \subseteq \Kitt(\mathfrak{a},I) \subseteq \mathfrak{a}:I$.
\end{rem}

\smallskip

\begin{thm} \cite[Theorem 5.1]{BH}
Let $R$ be a local Cohen-Macaulay ring and $I$ be an ideal of height $g \geq 2$. Assume that $I$ satisfies $SD_1$. Then any $s$-residual intersection $J = \mathfrak{a}:I$ coincides with the disguised residual intersection.
\end{thm}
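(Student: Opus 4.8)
The plan is to prove the nontrivial inclusion $\mathfrak{a}:I\subseteq\Kitt(\mathfrak{a},I)$; the other inclusion is \cite[Remark 4.24]{BH}. Since $\Kitt(\mathfrak{a},I)\subseteq\mathfrak{a}:I$, the quotient $(\mathfrak{a}:I)/\Kitt(\mathfrak{a},I)$ is a submodule of $R/\Kitt(\mathfrak{a},I)$, so it is enough to show it has no associated primes, i.e. to pin down $\mathrm{Ass}\big(R/\Kitt(\mathfrak{a},I)\big)$ and to check the two ideals agree after localizing at each such prime. First I would record the ``generic'' equality: if $\mathfrak{p}\notin V(I)$ then $(x_1,\dots,x_n)R_{\mathfrak{p}}=R_{\mathfrak{p}}$, so $K_\bullet(x_1,\dots,x_n;R_{\mathfrak{p}})$ is split exact, all the Koszul homology terms $\tilde H_i$ appearing in \cite[Theorem 4.23]{BH} vanish over $R_{\mathfrak{p}}$, and hence $\Kitt(\mathfrak{a},I)_{\mathfrak{p}}=\mathfrak{a}_{\mathfrak{p}}=(\mathfrak{a}:I)_{\mathfrak{p}}$. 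Thus $(\mathfrak{a}:I)/\Kitt(\mathfrak{a},I)$ is supported on $V(I)$.

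The technical engine is the complex computing $\Kitt(\mathfrak{a},I)$: recall from \cite{BH} that, under the wedge identification of Definition~4.1, $\Kitt(\mathfrak{a},I)$ is the $0$-th homology of an explicit complex $\mathcal{C}_\bullet$ of length $s$ built from $K_\bullet(x_1,\dots,x_n;R)$, the subalgebra $\Gamma_\bullet$, and free resolutions of the Koszul homology modules $H_i(x_1,\dots,x_n;R)$; in particular the higher homology of $\mathcal{C}_\bullet$ consists of subquotients of these $H_i$. The $SD_1$ hypothesis says precisely $\depth H_i(x_1,\dots,x_n;R)\ge\min\{\dim R-g,\ \dim R-n+i+1\}$ for all $i$, which is the depth input needed to run the Peskine--Szpiro Acyclicity Lemma degree by degree on $\mathcal{C}_\bullet$. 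I would conclude $\mathcal{C}_\bullet$ is acyclic, so $R/\Kitt(\mathfrak{a},I)$ is perfect of grade $s$; as $R$ is Cohen--Macaulay this makes $R/\Kitt(\mathfrak{a},I)$ Cohen--Macaulay of dimension $\dim R-s$ with no embedded primes. The hypothesis $g\ge 2$ enters here, in bounding the length of $\mathcal{C}_\bullet$ (discarding the topmost Koszul homology) so that the estimate $\min\{\dim R-g,\dots\}$ is exactly good enough.

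Now I combine the two ingredients. Every $\mathfrak{q}\in\mathrm{Ass}\big(R/\Kitt(\mathfrak{a},I)\big)=\mathrm{Min}\big(\Kitt(\mathfrak{a},I)\big)$ has $\height\mathfrak{q}=s$. If $I\not\subseteq\mathfrak{q}$ then by the first paragraph $\Kitt(\mathfrak{a},I)_{\mathfrak{q}}=\mathfrak{a}_{\mathfrak{q}}=(\mathfrak{a}:I)_{\mathfrak{q}}$. If $I\subseteq\mathfrak{q}$ then $\dim R_{\mathfrak{q}}=s$, and since $\mathfrak{q}$ is minimal over $\Kitt(\mathfrak{a},I)\supseteq\Fitt_0(I/\mathfrak{a})$ one checks $I_{\mathfrak{q}}\ne\mathfrak{a}_{\mathfrak{q}}$, so both $\Kitt(\mathfrak{a},I)_{\mathfrak{q}}$ and $(\mathfrak{a}:I)_{\mathfrak{q}}=\mathfrak{a}_{\mathfrak{q}}:I_{\mathfrak{q}}$ are $\mathfrak{q}R_{\mathfrak{q}}$-primary. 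This last situation is exactly the theorem for $(R_{\mathfrak{q}},I_{\mathfrak{q}})$ with $\dim R_{\mathfrak{q}}=s$, so I would organize the whole argument as a Noetherian induction on $\dim R$: the localizations $R_{\mathfrak{q}}$ again satisfy ``Cohen--Macaulay, $\height I_{\mathfrak{q}}\ge g\ge 2$, $SD_1$'' — one verifies $SD_1$ descends to $R_{\mathfrak{q}}$ using that Koszul homology commutes with localization and that the relevant depth bounds become Serre conditions, which localize — and $\dim R_{\mathfrak{q}}=s<\dim R$ unless we are already in the base case.

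The base case is $\dim R=s$: then $R/\Kitt(\mathfrak{a},I)$ and $R/(\mathfrak{a}:I)$ are both Artinian with $\Kitt(\mathfrak{a},I)\subseteq\mathfrak{a}:I$, so it suffices to show $\lambda\big(R/\Kitt(\mathfrak{a},I)\big)=\lambda\big(R/(\mathfrak{a}:I)\big)$; the left side equals the Euler characteristic of the acyclic complex $\mathcal{C}_\bullet$, which one computes and matches against the standard multiplicity formula for the $s$-residual intersection $\mathfrak{a}:I$ in dimension $s$. The two points I expect to be genuinely hard are: (i) checking that the Acyclicity Lemma really applies under only $SD_1$, i.e. tracking which homological degree of $\mathcal{C}_\bullet$ needs which depth and confirming $g\ge 2$ is the precise slack required; and (ii) the base-case length comparison — equivalently, making sure $SD_1$ and $\height\ge 2$ survive the localizations so the induction closes. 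Everything else is bookkeeping with Definition~4.1, \cite[Theorem 4.23]{BH}, and \cite[Proposition 4.19]{BH}.
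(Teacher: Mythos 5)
First, a point of reference: the paper gives no proof of this statement --- it is quoted verbatim from \cite{BH} (their Theorem 5.1) --- so your attempt has to be measured against the Bouca--Hassanzadeh argument. Your skeleton (the inclusion $\Kitt(\mathfrak{a},I)\subseteq\mathfrak{a}:I$ from Remark 4.5, equality off $V(I)$ because the Koszul complex becomes split exact there, and a depth bound on $R/\Kitt(\mathfrak{a},I)$ forcing $(\mathfrak{a}:I)/\Kitt(\mathfrak{a},I)$ to have no associated primes) is the right shape and is essentially how the \emph{geometric} case is handled. But one step is overstated: the complex whose zeroth homology computes $R/\Kitt(\mathfrak{a},I)$ is built from Koszul cycles $Z_\bullet$, not from free modules, so the Peskine--Szpiro acyclicity lemma cannot deliver ``perfect of grade $s$'' or Cohen--Macaulayness. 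What $SD_1$ actually buys, after a careful induction on truncations of that complex, is the depth estimate $\depth R/\Kitt(\mathfrak{a},I)\geq \dim R - s$, hence only that every associated prime of $R/\Kitt(\mathfrak{a},I)$ has height at most $s$. That is enough when $\height(I+J)\geq s+1$, since $\mathrm{Supp}\bigl((\mathfrak{a}:I)/\Kitt(\mathfrak{a},I)\bigr)\subseteq V(I+J)$, but it does not give the unmixedness ($\height$ exactly $s$) that your subsequent localization argument relies on; a priori a minimal prime of $\Kitt(\mathfrak{a},I)$ containing $I$ could have height as small as $g$.

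The genuine gap is exactly the non-geometric case, i.e.\ primes $\mathfrak{q}\supseteq I+J$ with $\height\mathfrak{q}=s$, which is where the theorem is hard. Your plan reduces this to an Artinian base case to be settled by matching $\lambda\bigl(R/\Kitt(\mathfrak{a},I)\bigr)$, ``computed as an Euler characteristic,'' against ``the standard multiplicity formula for the $s$-residual intersection.'' No such formula is available at this level of generality: the relevant complex is not a complex of free (or even finite-length-homology-friendly) modules, so there is no clean Euler-characteristic length formula, and computing the length of an arbitrary algebraic, non-geometric residual intersection is essentially equivalent to the statement being proved --- so the induction does not close. In \cite{BH} the passage from geometric to arbitrary algebraic residual intersections is done by a different mechanism: invariance of $\Kitt(\mathfrak{a},I)$ under the choice of generators, the specialization result \cite[Theorem 4.27]{BH} (surjectivity of $\Kitt$ modulo suitable elements), and an induction that reduces to lower residuals --- the same machinery this paper itself invokes in the proof of Theorem 4.11. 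Unless your base-case length comparison is replaced by an argument of that kind (or the statement is restricted to geometric residual intersections), the proof is incomplete; the remaining issues you flag, such as $SD_1$ localizing and the generator count dropping under localization, are comparatively minor.
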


Now we will derive an analogous result to our main theorem.

\begin{thm}
Let $R$ be a local Cohen-Macaulay ring and $I$ be an ideal of height $g \geq 2$, with $\mu(I) = n$ and which satisfies the $SD_1$ condition. Let $J = \mathfrak{a}:I$ be an $s$-residual intersection with $s \geq n-2$. Then for any generating set $a_1, \dots, a_s$ of $\mathfrak{a}$, one has
$$\mathfrak{a}:I = \Fitt_0(I/\mathfrak{a}) + \sum_{\{a_{\nu_1},\dots,a_{\nu_{n-2}}\} \subseteq \{a_1,\dots,a_s\}} (a_{\nu_1},\dots,a_{\nu_{n-2}}):I.$$
\end{thm}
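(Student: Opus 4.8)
The plan is to derive Theorem 4.6 from the disguised residual intersection machinery of Bouca and Hassanzadeh rather than from the element-by-element selection of Lemma 2.3. The key structural input is Theorem 4.5 (\cite[Theorem 5.1]{BH}): since $R$ is Cohen-Macaulay, $\height I = g \geq 2$, and $I$ satisfies $SD_1$, every $s$-residual intersection $J = \mathfrak{a}:I$ equals the disguised residual intersection $\Kitt(\mathfrak{a},I)$. So it suffices to prove the claimed formula for $\Kitt(\mathfrak{a},I)$, and this reduces the theorem to a purely combinatorial identity among subalgebras of the Koszul complex $K_\bullet(x_1,\dots,x_n;R)$, where $I = (x_1,\dots,x_n)$ and $n = \mu(I)$. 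Note that because $SD_1$ is inherited by all the relevant sub-ideals and because $\Kitt$ does not depend on the choice of generators of $\mathfrak{a}$ (\cite[Section 4.2]{BH}), there is no need to choose the $a_i$ carefully — any generating set works, which is why the hypotheses here are cleaner than in Theorem 2.5.

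The heart of the argument is Theorem 4.3 (\cite[Theorem 4.23]{BH}), which gives
$$\Kitt(\mathfrak{a},I)\cdot e_1\wedge\dots\wedge e_n = \mathfrak{a}\cdot e_1\wedge\dots\wedge e_n + \sum_{i=\max\{0,n-s\}}^{n-g}\Gamma_{n-i}\cdot\tilde H_i,$$
where $\Gamma_\bullet$ is the subalgebra of $K_\bullet$ generated by the $\zeta_j$ (one for each $a_j$) and $\tilde H_\bullet$ is generated by Koszul-cycle lifts of the Koszul homology generators. The summands with $i \geq 2$, i.e. the part $\sum_{i\geq 2}\Gamma_{n-i}\cdot\tilde H_i$, involve products $\zeta_{j_1}\wedge\dots\wedge\zeta_{j_{n-i}}$ with $n-i \leq n-2$ of the $\zeta$'s, so each such product only sees $n-i \leq n-2$ of the generators $a_{j_1},\dots,a_{j_{n-i}}$; I expect that for a subset $\{a_{\nu_1},\dots,a_{\nu_{n-2}}\}$ the corresponding truncated data recovers exactly $\Kitt((a_{\nu_1},\dots,a_{\nu_{n-2}}),I)$, which by Theorem 4.5 applied to the $(n-2)$-residual intersection (after checking the height hypothesis is met, which it is since $n-2 \geq g-2$ and one reduces to the case $s \geq n-1$) equals $(a_{\nu_1},\dots,a_{\nu_{n-2}}):I$. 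The remaining summand, $i = 1$, is governed by Proposition 4.4 (\cite[Proposition 4.19]{BH}): $\langle\Gamma_\bullet\cdot\langle Z_1\rangle\rangle_n = \Fitt_0(I/\mathfrak{a})\cdot e_1\wedge\dots\wedge e_n$, and since $\tilde H_1$ is built from cycle lifts in $Z_1$, the term $\sum_j \Gamma_{n-1}\cdot\tilde H_1$ contributes exactly $\Fitt_0(I/\mathfrak{a})$. Finally the term $\mathfrak{a}\cdot e_1\wedge\dots\wedge e_n$ is absorbed: $\mathfrak{a}\subseteq\sum(a_{\nu_1},\dots,a_{\nu_{n-2}}):I$ whenever $s\geq n-2$ and $n-2\geq 1$, because each generator $a_j$ lies in some $(n-2)$-element subset.

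Assembling these pieces gives
$$\Kitt(\mathfrak{a},I) = \Fitt_0(I/\mathfrak{a}) + \sum_{\{a_{\nu_1},\dots,a_{\nu_{n-2}}\}\subseteq\{a_1,\dots,a_s\}}\Kitt\big((a_{\nu_1},\dots,a_{\nu_{n-2}}),I\big),$$
and then Theorem 4.5 converts every $\Kitt$ on the right into the corresponding colon ideal $(a_{\nu_1},\dots,a_{\nu_{n-2}}):I$ and the $\Kitt$ on the left into $\mathfrak{a}:I$. One direction of the final equality (the $\supseteq$ inclusion) is free from Remark 4.2, namely $\Fitt_0(I/\mathfrak{a})\subseteq\mathfrak{a}:I$ together with $(a_{\nu_1},\dots,a_{\nu_{n-2}}):I\subseteq\mathfrak{a}:I$; the content is the $\subseteq$ inclusion, which is exactly what the $\Kitt$ decomposition supplies. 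The step I expect to be the main obstacle is the identification of the higher terms $\Gamma_{n-i}\cdot\tilde H_i$ ($i\geq 2$) with the lower disguised residual intersections: one must verify that restricting the generators $\zeta_1,\dots,\zeta_s$ of $\Gamma_\bullet$ to a size-$(n-2)$ sub-collection and truncating the homology range $\max\{0,n-s\}\leq i\leq n-g$ to $\max\{0,n-(n-2)\}\leq i\leq n-g$, i.e. $2\leq i\leq n-g$, reproduces precisely the decomposition of Theorem 4.3 for the smaller ideal — in other words that no "cross terms" involving more than $n-2$ of the $\zeta$'s survive into the degree-$n$ component, and that the $SD_1$ hypothesis (needed to invoke Theorem 4.5 at the lower level) together with $\height I\geq 2$ indeed passes to the relevant sub-ideals. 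Handling the boundary case $g = 2$, where the homology range may be short, and the base case $s = n-2$ of an implicit induction on $s$, should be routine once this combinatorial correspondence is nailed down.
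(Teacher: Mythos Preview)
Your overall strategy matches the paper's: identify $\mathfrak{a}:I$ with $\Kitt(\mathfrak{a},I)$ via the $SD_1$ theorem, decompose $\Kitt$ using \cite[Theorem 4.23]{BH}, absorb the $i=0,1$ terms into $\Fitt_0(I/\mathfrak{a})$ via Proposition 4.4, and recognize the remaining $i\geq 2$ terms (together with $\mathfrak{a}\cdot e_1\wedge\dots\wedge e_n$) as $\sum_{|\nu|=n-2}\Kitt((a_{\nu_1},\dots,a_{\nu_{n-2}}),I)$. That is exactly the paper's argument.

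There is, however, one genuine gap. You propose to invoke the $SD_1$ theorem again at the lower level to convert each $\Kitt((a_{\nu_1},\dots,a_{\nu_{n-2}}),I)$ into the colon ideal $(a_{\nu_1},\dots,a_{\nu_{n-2}}):I$. That theorem requires the colon to be an $(n-2)$-residual intersection, i.e.\ $\height\big((a_{\nu_1},\dots,a_{\nu_{n-2}}):I\big)\geq n-2$. For an \emph{arbitrary} generating set $a_1,\dots,a_s$ of $\mathfrak{a}$---which is precisely the generality the theorem claims---this height bound can fail; indeed the paper remarks immediately after the proof that ``the colon ideals in Theorem 4.7 are not necessarily $(n-2)$-residual intersections.'' Your justification (``$n-2\geq g-2$'') addresses the wrong inequality: what is needed is a lower bound on the height of the colon, not a comparison of $n-2$ with $g$.

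The repair is painless and is what the paper actually does: you do not need equality at the lower level, only the inclusion $\Kitt((a_{\nu_1},\dots,a_{\nu_{n-2}}),I)\subseteq (a_{\nu_1},\dots,a_{\nu_{n-2}}):I$, which holds unconditionally by Remark 4.5. Combined with the trivial inclusion $(a_{\nu_1},\dots,a_{\nu_{n-2}}):I\subseteq \mathfrak{a}:I$, this sandwiches the sum of colons between $\sum\Kitt$ and $\mathfrak{a}:I$, which is all that is required. So drop the second appeal to the $SD_1$ theorem and use only the $\Kitt\subseteq$ colon inclusion on the right-hand side; your own final paragraph already contains the correct logic for why that suffices.
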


\begin{proof}Throughout this proof, we use the notation of Theorem 4.2 and Theorem 4.3. Note that this is clear when $s = n-2$. So, assume that $s \geq n-1$. If $s = n-1$, then by Theorem 4.3 and Theorem 4.6, $$\mathfrak{a}:I\cdot e_1\wedge\dots\wedge e_n = \mathfrak{a}\cdot e_1\wedge\dots\wedge e_n+\Gamma_{n-1}\cdot\tilde{H}_1 + \sum_{i=2}^{n-g}\Gamma_{n-i}\cdot\tilde{H}_i.$$

Similarly, if $s \geq n$, then by Theorem 4.3 and Theorem 4.6,
$$\mathfrak{a}:I \cdot e_1\wedge\dots\wedge e_n = \mathfrak{a} \cdot e_1\wedge\dots\wedge e_n +\Gamma_{n}\cdot\tilde{H}_0 +\Gamma_{n-1}\cdot\tilde{H}_1 + \sum_{i=2}^{n-g}\Gamma_{n-i}\cdot\tilde{H}_i.$$

It is clear that $\Gamma_{n}\cdot\tilde{H}_0 \subseteq \Fitt_0(I/\mathfrak{a}) \cdot e_1\wedge\dots\wedge e_n$ and Proposition 4.4 implies that $\Gamma_{n-1}\cdot\tilde{H}_1 \subseteq \Fitt_0(I/\mathfrak{a}) \cdot e_1\wedge\dots\wedge e_n$. As $\Fitt_0(I/\mathfrak{a}) \subseteq \mathfrak{a}:I$, we have that if $s \geq n-1$,
$$\mathfrak{a}:I \cdot e_1\wedge\dots\wedge e_n = \mathfrak{a} \cdot e_1\wedge\dots\wedge e_n + \Fitt_0(I/\mathfrak{a}) \cdot e_1\wedge\dots\wedge e_n + \sum_{i=2}^{n-g}\Gamma_{n-i}\cdot\tilde{H}_i.$$

Applying elementary properties of the exterior algebra and Theorem 4.3, one can see that

$$\mathfrak{a} \cdot e_1\wedge\dots\wedge e_n + \sum_{i=2}^{n-g}\Gamma_{n-i}\cdot\tilde{H}_i = \sum_{\{a_{\nu_1},\dots,a_{\nu_{n-2}}\} \subseteq \{a_1,\dots,a_s\}} \Kitt((a_{\nu_1},\dots,a_{\nu_{n-2}}),I) \cdot e_1\wedge\dots\wedge e_n.$$

Note that, by Remark 4.5,
$$ \Kitt((a_{\nu_1},\dots,a_{\nu_{n-2}}),I) \subseteq  (a_{\nu_1},\dots,a_{\nu_{n-2}}):I \subseteq \mathfrak{a}:I. $$

So, putting it all together, we have that $$\mathfrak{a}:I = \Fitt_0(I/\mathfrak{a}) + \sum_{\{a_{\nu_1},\dots,a_{\nu_{n-2}}\} \subseteq \{a_1,\dots,a_s\}} (a_{\nu_1},\dots,a_{\nu_{n-2}}):I.$$

\end{proof}

One should note that, unlike in our main result, the colon ideals in the Theorem 4.7 are not necessarily $(n-2)$-residual intersections.

Now, we will state corollaries that are analogous to the reults of Section 3. We begin with a corollary analogous to Corollary 3.2. Note that Corollary 3.2 requires that $I$ be generically a complete intersection, but only requires $I$ to be almost Cohen Macaulay rather than Cohen Macaulay.

\begin{cor} \cite[Corollary 5.5]{BH}
Let $R$ be a local Cohen-Macaulay ring and let $I$ be an almost complete intersection ideal which is Cohen-Macaulay. Let $J = \mathfrak{a}:I$ be an $s$-residual intersection. Then $\mathfrak{a}:I = \Fitt_0(I/\mathfrak{a})+\mathfrak{a}.$
\end{cor}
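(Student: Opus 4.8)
The plan is to derive Corollary 4.8 directly from Theorem 4.7, exactly paralleling the way Corollary 3.2 was derived from Theorem 2.5. First I would reduce to the essential case: since the statement is about an $s$-residual intersection for a fixed $s$, and since an almost complete intersection has $\mu(I) = \height(I) + 1 = g+1$, I may assume $s \geq g$. I would also need $g \geq 2$ to invoke Theorem 4.7; if $g \leq 1$ the ideal $I$ being an almost complete intersection and Cohen-Macaulay forces a degenerate situation (e.g.\ $g = 1$ gives a principal-times-something case) that can be handled separately, or one simply notes the cited \cite[Corollary 5.5]{BH} covers it. Assuming $g \geq 2$, the key input is that $I$ satisfies $SD_1$: since $I$ is Cohen-Macaulay (so $\depth R/I = \dim R/I = d - g$) the Koszul homology $H_i(x_1,\dots,x_n;R)$ has depth at least $d-g$ automatically once $R$ is Cohen-Macaulay, which is precisely the $SD_1$ bound $\min\{d-g,\, d-n+i+1\}$. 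So the hypotheses of Theorem 4.7 are met with $n = \mu(I) = g+1$.

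Next I would apply Theorem 4.7 with $n = g+1$, so $n - 2 = g - 1$, obtaining
$$\mathfrak{a}:I = \Fitt_0(I/\mathfrak{a}) + \sum_{\{a_{\nu_1},\dots,a_{\nu_{g-1}}\} \subseteq \{a_1,\dots,a_s\}} (a_{\nu_1},\dots,a_{\nu_{g-1}}):I.$$
Then I would argue, just as in the proof of Corollary 3.2, that each length-$(g-1)$ subsequence $a_{\nu_1},\dots,a_{\nu_{g-1}}$ satisfies $(a_{\nu_1},\dots,a_{\nu_{g-1}}):I = (a_{\nu_1},\dots,a_{\nu_{g-1}})$. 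The reason is that $\height I = g$, so for any ideal $\mathfrak{b}$ generated by fewer than $g$ elements, $\height(\mathfrak{b} + I) \geq \height I = g > \mu(\mathfrak{b})$, whence $I \not\subseteq p$ for any minimal prime $p$ of $\mathfrak{b}$ of small height; combined with the fact that in Theorem 4.7's setup a generic choice of generators (or here just any generating set, since $\mu(I)=g+1$ and $R/I$ Cohen-Macaulay gives that subsequences of length $\leq g$ are regular sequences) makes $(a_{\nu_1},\dots,a_{\nu_{g-1}})$ a height-$(g-1)$ unmixed ideal with no associated prime containing $I$, one gets $(a_{\nu_1},\dots,a_{\nu_{g-1}}):I = (a_{\nu_1},\dots,a_{\nu_{g-1}})$. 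Summing over all such subsets yields $\sum (a_{\nu_1},\dots,a_{\nu_{g-1}}) = \mathfrak{a}$.

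Putting these together gives $\mathfrak{a}:I = \Fitt_0(I/\mathfrak{a}) + \mathfrak{a}$, which is the claim. The main obstacle I anticipate is the colon-equals-ideal step: one must be careful that $(a_{\nu_1},\dots,a_{\nu_{g-1}})$ really is unmixed of height $g-1$ with no embedded or minimal prime containing $I$. For a general generating set this could fail, so the cleanest route is to observe that $I$ almost complete intersection plus $R/I$ Cohen-Macaulay forces $I$ to be generated by a $d$-sequence (as in Corollary 3.2's proof via \cite[Theorem 3.4.c]{HVV}), and that any $g-1 < g = \height I$ of the $d$-sequence generators form a regular sequence; then $R/(a_{\nu_1},\dots,a_{\nu_{g-1}})$ is Cohen-Macaulay of dimension $d - (g-1)$, hence unmixed, and since $\height I = g$ no associated prime of it contains $I$, giving the colon equality. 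A secondary point worth a sentence is handling the $g \leq 1$ edge case, or simply restricting attention to $g \geq 2$ and citing \cite{BH} for the rest. I would also remark, as the paper does for its other corollaries, that this recovers \cite[Corollary 5.5]{BH}.
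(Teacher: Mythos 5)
The paper does not actually prove this corollary -- it is quoted verbatim from \cite[Corollary 5.5]{BH} -- so your derivation from Theorem 4.7 is a genuinely new argument paralleling Corollary 3.2, and the overall architecture (apply Theorem 4.7 with $n=g+1$, then collapse each $(a_{\nu_1},\dots,a_{\nu_{g-1}}):I$ to $(a_{\nu_1},\dots,a_{\nu_{g-1}})$) is sound. However, the two steps that carry the real content are not justified as written. First, the $SD_1$ verification: your claim that the Koszul homology modules ``have depth at least $d-g$ automatically once $R$ is Cohen-Macaulay'' is a false general principle -- if it were true, every ideal with $R/I$ Cohen-Macaulay in a Cohen-Macaulay ring would satisfy sliding depth, which is exactly what these conditions are designed to distinguish. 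What you actually need is that an almost complete intersection with $R/I$ Cohen-Macaulay is strongly Cohen-Macaulay (here only $H_0$ and $H_1$ are nonzero, and $SD_1$ amounts to both being Cohen-Macaulay). This is true but requires an argument: after a flat local base change to get $|k|=\infty$, choose generators $\alpha_1,\dots,\alpha_g,y$ of $I$ with $\underline{\alpha}$ a regular sequence (the $SD_k$ condition is independent of the generating set); then $H_1(\underline{\alpha},y;R)\cong 0:_{\bar R}\bar y$ with $\bar R=R/(\underline{\alpha})$, and the depth lemma applied to $0\to \bar y\bar R\to \bar R\to R/I\to 0$ and $0\to (0:_{\bar R}\bar y)\to \bar R\to \bar y\bar R\to 0$ gives $\depth(0:_{\bar R}\bar y)\geq d-g$. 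The duality $H_1\cong\omega_{R/I}$ you may have had in mind (as in \cite{AH}) needs $R$ Gorenstein, which is not assumed here.

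Second, the colon-collapse step is muddled. Theorem 4.7 holds for any generating set of $\mathfrak{a}$, but $(a_{\nu_1},\dots,a_{\nu_{g-1}}):I=(a_{\nu_1},\dots,a_{\nu_{g-1}})$ certainly does not: your parenthetical that ``just any generating set'' works is wrong, and your proposed fix via a $d$-sequence confuses the two ideals -- the $d$-sequence of \cite[Theorem 3.4.c]{HVV} generates $I$, whereas the elements $a_{\nu_j}$ are generators of $\mathfrak{a}$, so that property says nothing about the subsets $(a_{\nu_1},\dots,a_{\nu_{g-1}})$. The correct repair is simple: the conclusion $\mathfrak{a}:I=\Fitt_0(I/\mathfrak{a})+\mathfrak{a}$ mentions no generators, so you are free to choose them. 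Since any prime containing $\mathfrak{a}$ contains $I$ or $\mathfrak{a}:I$, one has $\height\mathfrak{a}=g$, so (with $|k|=\infty$) general elements $a_1,\dots,a_s$ generating $\mathfrak{a}$ can be chosen so that every subset of size at most $g$ generates an ideal of height equal to its cardinality; then each $(a_{\nu_1},\dots,a_{\nu_{g-1}})$ is a complete intersection, hence unmixed of height $g-1$, no associated prime of it contains $I$ (as $\height I=g$), and the colon equality follows, with the sum of these subideals equal to $\mathfrak{a}$ because $s\geq g-1$. Finally, the edge cases you wave at should be dispatched explicitly: if $s\leq g-1$ the same height argument gives $\mathfrak{a}:I=\mathfrak{a}$ directly, and the hypotheses $g\geq 2$ and $|k|=\infty$ required by Theorem 4.7 and by the general-position choices must either be added or removed by base change and a separate low-height argument.
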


The next result is more general than Corollary 3.3, as it does not require the $G_s$ condition.

\begin{cor}
Let $R$ be a local Gorenstein ring and let $I$ be a proper $R$-ideal such that $R/I$ is Cohen-Macaulay and let $\mathfrak{a} \subsetneq I$. Let $\height(I) = g \geq 2$, $\mu(I) = g+2$ and $s \geq g$. Let $\mathfrak{a}:I$ be an $s$-residual intersection. For any generating sequence $a_1,\dots,a_s$ of $\mathfrak{a}$ one has $$\mathfrak{a}:I = \Fitt_0(I/\mathfrak{a}) + \sum_{\{a_{\nu_1},\dots,a_{\nu_{g}}\} \subseteq \{a_1,\dots,a_s\}} (a_{\nu_1},\dots,a_{\nu_{g}}):I.$$
\end{cor}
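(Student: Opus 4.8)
The plan is to deduce this corollary from Theorem 4.7 in the same spirit as Corollary 3.3 was deduced from Theorem 2.5, so the work is to (a) verify the $SD_1$ hypothesis of Theorem 4.7, and (b) simplify the low-degree colon terms. First I would reduce to the essential case: we may assume $s \geq g$ and $\mu(I) = g+2 = n$, so that $n-2 = g$, and the conclusion of Theorem 4.7 matches the stated formula verbatim once we know $SD_1$ holds. For the reduction of hypotheses, I would argue as in Corollary 3.3 that, since $R$ is Gorenstein, $R/I$ is Cohen-Macaulay, and $\mu(I) = g+2$, the ideal $I$ is strongly Cohen-Macaulay by \cite{AH}; then a strongly Cohen-Macaulay ideal over a Cohen-Macaulay local ring has all Koszul homology modules maximal Cohen-Macaulay, so $\depth H_i(x_1,\dots,x_n;R) = d-g$ for all $i$, which is at least $\min\{d-g,\,d-n+i+1\}$. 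Hence $I$ satisfies $SD_1$, and Theorem 4.7 applies to give
$$\mathfrak{a}:I = \Fitt_0(I/\mathfrak{a}) + \sum_{\{a_{\nu_1},\dots,a_{\nu_{g}}\} \subseteq \{a_1,\dots,a_s\}} (a_{\nu_1},\dots,a_{\nu_{g}}):I.$$

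Since the summation index $n-2$ already equals $g$, there is in fact nothing further to simplify, and the displayed equality is exactly the claim; unlike in Corollary 3.3 I would not expect to identify the terms $(a_{\nu_1},\dots,a_{\nu_g}):I$ with links here, because Theorem 4.7 does not provide that the chosen generating sequence has the genericity properties of Lemma 2.3, so the corollary is stated (correctly) without the "each term is a link" clause. If one does want the link interpretation, one would need to additionally invoke that a generic choice of generators makes every length-$g$ subsequence a regular sequence and that $I$ is unmixed (from $R/I$ Cohen-Macaulay), together with \cite{PS}; but as stated the corollary does not require this, so I would omit it.

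The only real obstacle is confirming the $SD_1$ condition, i.e. controlling the depths of all the Koszul homology modules $H_i(x_1,\dots,x_n;R)$, and this is handled cleanly by the strongly Cohen-Macaulay property: I expect the main point to be citing \cite{AH} for strong Cohen-Macaulayness in the $\mu(I) = g+2$, $R/I$ Cohen-Macaulay case, and then observing that strong Cohen-Macaulayness is by definition the statement that every $H_i$ is a maximal Cohen-Macaulay $R/I$-module, which immediately yields the depth bound defining $SD_1$ (in fact it yields the stronger $SD_k$ for all $k$). Everything else is a direct application of Theorem 4.7. Accordingly the proof is short:

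\begin{proof}
We may assume $s \geq g$ and $\mu(I) = g+2$; write $n = \mu(I) = g+2$, so $n-2 = g$. Since $R$ is Gorenstein, $R/I$ is Cohen-Macaulay, and $\mu(I) = g+2$, the ideal $I$ is strongly Cohen-Macaulay by \cite{AH}. Hence each Koszul homology module $H_i(x_1,\dots,x_n;R)$ is a maximal Cohen-Macaulay $R/I$-module, so $\depth H_i(x_1,\dots,x_n;R) = d - g \geq \min\{d-g,\, d-n+i+1\}$ for all $i \geq 0$, where $d = \dim R$. Thus $I$ satisfies $SD_1$, and since $g \geq 2$ we may apply Theorem 4.7 with $s \geq n-2$ to obtain
$$\mathfrak{a}:I = \Fitt_0(I/\mathfrak{a}) + \sum_{\{a_{\nu_1},\dots,a_{\nu_{n-2}}\} \subseteq \{a_1,\dots,a_s\}} (a_{\nu_1},\dots,a_{\nu_{n-2}}):I = \Fitt_0(I/\mathfrak{a}) + \sum_{\{a_{\nu_1},\dots,a_{\nu_{g}}\} \subseteq \{a_1,\dots,a_s\}} (a_{\nu_1},\dots,a_{\nu_{g}}):I,$$
since $n - 2 = g$.
\end{proof}
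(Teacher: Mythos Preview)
Your proof is correct and follows exactly the paper's approach: use \cite{AH} to obtain that $I$ is strongly Cohen-Macaulay (since $R$ is Gorenstein, $R/I$ is Cohen-Macaulay, and $\mu(I)=g+2$), observe that this implies $SD_1$, and then apply Theorem~4.7 with $n-2=g$. The only difference is that you spell out the depth inequality verifying $SD_1$, whereas the paper simply asserts that strong Cohen-Macaulayness implies $SD_1$.
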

\begin{proof}
Note that, as $R$ is Gorenstein and as $\mu(I) = g+2$ and $R/I$ Cohen-Macaulay, by \cite{AH} we have that $I$ is strongly Cohen-Macaulay, and thus satisfies $SD_1$. So, we can apply Theorem 4.7.
\end{proof}

The next result is analogous to Corollary 3.5. Note that in Corollary 3.5 we require that $I$ is $G_s$ and weakly $(s-2)$-residually $S_2$ rather than $I$ being generically a complete intersection and satisfying condition $SD_1$.

\begin{cor}
Let $(R,m,k)$ be a local Gorenstein ring with $|k| = \infty$. Let $I$ be a proper $R$-ideal such that $R/I$ is Gorenstein and let $\mathfrak{a} \subsetneq I$. Suppose $I$ is of height $g \geq 2$, $\mu(I) = g+3$, $\Ext^1_{R/I}(I/I^2,R/I) = 0$, and $I$ is generically a complete intersection and satisfies the condition $SD_1$. Let $\mathfrak{a}:I$ be an $s$-residual intersection. For any generating sequence $a_1, \dots, a_s$ of $\mathfrak{a}$ such that any length $g$ subsequence is a regular sequence, one has $$\mathfrak{a}:I = \Fitt_0(I/\mathfrak{a}) + \sum_{\{a_{\nu_1},\dots,a_{\nu_{g}}\} \subseteq \{a_1,\dots,a_s\}} (a_{\nu_1},\dots,a_{\nu_{g}}):I$$

where each $(a_{\nu_1},\dots,a_{\nu_{g}}):I$ is a link.
\end{cor}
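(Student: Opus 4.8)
Set $n := \mu(I) = g+3$, so that $n-2 = g+1$ and $n-3 = g$; we may assume $s \ge g+1$, which is what Theorem 4.7 requires. The plan is to chain together Theorem 4.7 (which expresses $\mathfrak{a}:I$ via $\Fitt_0(I/\mathfrak{a})$ and the colons $(a_{\nu_1},\dots,a_{\nu_{g+1}}):I$ of length-$(g+1)$ subsets), Theorem 3.4 (which rewrites each such colon as a sum of colons of length-$g$ subsets, this being where the hypothesis $\Ext^1_{R/I}(I/I^2,R/I)=0$ is consumed), and the Peskine--Szpiro theorem \cite{PS} (which shows each length-$g$ colon is a link). First I would handle the base case $s = g+1 = n-2$: here $\mathfrak{a}:I$ is itself a $(g+1)$-residual intersection, so Theorem 3.4 applies directly to $\mathfrak{a}$ — using that $R$ and $R/I$ are Gorenstein, $I$ is generically a complete intersection, and $\Ext^1_{R/I}(I/I^2,R/I)=0$ — giving $\mathfrak{a}:I = \sum_{\{a_{\mu_1},\dots,a_{\mu_g}\}\subseteq\{a_1,\dots,a_s\}} (a_{\mu_1},\dots,a_{\mu_g}):I$, to which $\Fitt_0(I/\mathfrak{a})\subseteq\mathfrak{a}:I$ may be adjoined harmlessly. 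So assume $s \ge g+2$.

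For the main case, Theorem 4.7 (valid since $I$ satisfies $SD_1$ and $g\ge 2$) gives
$$\mathfrak{a}:I = \Fitt_0(I/\mathfrak{a}) + \sum_{\{a_{\nu_1},\dots,a_{\nu_{g+1}}\} \subseteq \{a_1,\dots,a_s\}} (a_{\nu_1},\dots,a_{\nu_{g+1}}):I,$$
and since every summand lies in $\mathfrak{a}:I$ it suffices to prove each $(a_{\nu_1},\dots,a_{\nu_{g+1}}):I$ is contained in $\Fitt_0(I/\mathfrak{a}) + \sum_{\{a_{\mu_1},\dots,a_{\mu_g}\}\subseteq\{a_1,\dots,a_s\}} (a_{\mu_1},\dots,a_{\mu_g}):I$. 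Fixing a length-$(g+1)$ subset, every length-$g$ subsequence of it is a regular sequence by hypothesis, so — once one knows $\height\!\big((a_{\nu_1},\dots,a_{\nu_{g+1}}):I\big)\ge g+1$ — this colon ideal is a genuine $(g+1)$-residual intersection of $I$, and Theorem 3.4 rewrites it as $\sum_{\{a_{\mu_1},\dots,a_{\mu_g}\}\subseteq\{a_{\nu_1},\dots,a_{\nu_{g+1}}\}} (a_{\mu_1},\dots,a_{\mu_g}):I$, which sits inside the target sum; this proves the displayed formula. Finally, for any length-$g$ subset, $a_{\mu_1},\dots,a_{\mu_g}$ is a regular sequence of length $g = \height I$ inside $I$; since $R/I$ is Cohen--Macaulay (it is Gorenstein) the ideal $I$ is unmixed and $R$ is Gorenstein, so by \cite{PS} the ideals $I$ and $(a_{\mu_1},\dots,a_{\mu_g}):I$ are linked via $(a_{\mu_1},\dots,a_{\mu_g})$.

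The step I expect to be the main obstacle is the height claim $\height\!\big((a_{\nu_1},\dots,a_{\nu_{g+1}}):I\big)\ge g+1$: by the remark following Theorem 4.7, the colon ideals produced there are not automatically $(n-2)$-residual intersections, and the regular-sequence hypothesis on length-$g$ subsequences does not force this by itself (all $g+1$ elements could lie in a common height-$g$ prime off $V(I)$). I would address it either by a prime-avoidance argument — since $\mathfrak{a}:I$ is an $s$-residual intersection with $s \ge g+1$, no height-$g$ prime avoiding $V(I)$ contains $\mathfrak{a}$, so, as $|k| = \infty$, one can arrange the generators so that every length-$(g+1)$ subset generates an ideal of height $g+1$ off $V(I)$ — or, to cover an arbitrary such generating sequence, by localizing at an offending height-$g$ prime $p\not\supseteq I$, where $\big((a_{\nu_1},\dots,a_{\nu_{g+1}}):I\big)_p = (a_{\nu_1},\dots,a_{\nu_{g+1}})_p$ is already contained in $\big((a_{\nu_1},\dots,a_{\nu_g}):I\big)_p$.

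Alternatively, one can circumvent the intermediate $(g+1)$-residual intersections entirely by arguing in the Koszul-algebra language of Theorems 4.2 and 4.6. Since $g\ge 2$ and $I$ satisfies $SD_1$, Theorem 4.6 identifies $\mathfrak{a}:I$ with $\Kitt(\mathfrak{a},I)$ and each link $(a_{\mu_1},\dots,a_{\mu_g}):I$ with $\Kitt\!\big((a_{\mu_1},\dots,a_{\mu_g}),I\big)$; Theorem 4.2 together with Proposition 4.4 then give, for $s\ge g+1$,
$$\mathfrak{a}:I\cdot e_1\wedge\dots\wedge e_n = \mathfrak{a}\cdot e_1\wedge\dots\wedge e_n + \Fitt_0(I/\mathfrak{a})\cdot e_1\wedge\dots\wedge e_n + \Gamma_{n-2}\cdot\tilde H_2 + \Gamma_{n-3}\cdot\tilde H_3$$
and
$$\sum_{\{a_{\mu_1},\dots,a_{\mu_g}\}\subseteq\{a_1,\dots,a_s\}} (a_{\mu_1},\dots,a_{\mu_g}):I\cdot e_1\wedge\dots\wedge e_n = \mathfrak{a}\cdot e_1\wedge\dots\wedge e_n + \Gamma_{n-3}\cdot\tilde H_3,$$
so the corollary comes down to the single inclusion $\Gamma_{n-2}\cdot\tilde H_2 \subseteq \mathfrak{a}\cdot e_1\wedge\dots\wedge e_n + \Fitt_0(I/\mathfrak{a})\cdot e_1\wedge\dots\wedge e_n + \Gamma_{n-3}\cdot\tilde H_3$. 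Via Theorem 4.6 this is exactly the $\Kitt$-translation of the ``if'' direction of Theorem 3.4, so it is the vanishing $\Ext^1_{R/I}(I/I^2,R/I)=0$ in disguise; proving it directly from this $T^2$-vanishing — with the only real bookkeeping being the two displayed identifications above — is, I expect, the cleanest way to finish, and I would pursue this route.
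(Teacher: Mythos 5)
Your primary route---Theorem 4.7 to reduce $\mathfrak{a}:I$ to $\Fitt_0(I/\mathfrak{a})$ plus the length-$(g+1)$ colons, Theorem 3.4 (where the hypothesis $\Ext^1_{R/I}(I/I^2,R/I)=0$ is consumed) to decompose each of those into length-$g$ colons, and Peskine--Szpiro to identify each length-$g$ colon as a link---is exactly the paper's proof, which consists of the single line ``This follows from Theorem 4.7 and Theorem 3.4.'' The height subtlety you single out (that for an arbitrary generating sequence the length-$(g+1)$ colons need not be $(g+1)$-residual intersections, as the paper itself remarks immediately after Theorem 4.7) is genuine but is passed over silently in the paper's proof as well, so your discussion of it and your alternative $\Kitt$-theoretic reduction are refinements of, not departures from, the published argument.
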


\begin{proof}
This follows from Theorem 4.7 and Theorem 3.4.
\end{proof}

It should be noted that in the setting of both Corollary 4.9 and Corollary 4.10 we can always select generators of $\mathfrak{a}$ such that any length $g$ subsequence is a regular sequence.

Now we shall move on to one final notable comparison between the two methods. The following is a partial proof for Conjecture 5.8 from Bouca and Hassanzadeh \cite{BH}, proving that it is true in the case where $I$ satisfies $G_s$ and is weakly $s$-residually $S_2$. It should be noted that, by applying the strategy of Theorem 4.7, this would rederive the results of Section 3. However, the methods used in Section 3 are more elementary, as they do not require the notion of disguised residual intersections.

\begin{thm}
Let $R$ be a Cohen-Macaulay ring, then $\mathfrak{a}:I = \Kitt(\mathfrak{a},I)$ whenever $a:I$ is an $s$-residual intersection and $I$ satisfies $G_s$ and is weakly $(s-2)$-residually $S_2$.
\end{thm}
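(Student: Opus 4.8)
The plan is to run the same induction on $s$ and reverse induction on $t = \mu((\mathfrak{a}+mI)/mI)$ that drives Theorem 2.5, but tracking the ideal $\Kitt(\mathfrak{a},I)$ instead of the explicit sum of lower residuals. The point is that $\Kitt$ behaves exactly like the right-hand side of Theorem 2.5 under the two reduction steps: it contains $\Fitt_0(I/\mathfrak{a})$ always (Remark 4.5, or rather the analogue stated for this paper), it is squeezed between $\Fitt_0(I/\mathfrak{a})$ and $\mathfrak{a}:I$, and for the small cases $s = \mu(I)-2$ it can be identified with the sum appearing in Theorem 2.5. So first I would reduce to the case $s \geq \mu(I)-1$: when $s \leq \mu(I)-2$ one checks directly that $\Kitt(\mathfrak{a},I) = \Fitt_0(I/\mathfrak{a}) + \mathfrak{a} + \sum_{i \geq 2}\Gamma_{n-i}\tilde H_i$ already equals $\mathfrak{a}:I$ — this is where $SD_1$ normally enters in Section 4, but here the $G_s$ / weakly-$S_2$ hypotheses play the same role via Theorem 2.5 applied with the $(n-2)$-residuals being genuine geometric residual intersections; and in that regime the sum of the lower residuals in Theorem 2.5 is, by elementary exterior-algebra manipulations exactly as at the end of the proof of Theorem 4.7, the same as $\sum_{\{\nu\}} \Kitt((a_{\nu_1},\dots,a_{\nu_{n-2}}),I)$, which one shows telescopes back to $\Kitt(\mathfrak{a},I)$.

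**Main steps.** (1) Pass to generators $a_1,\dots,a_s$ as in Lemma 2.3, legitimate since $I$ is $G_s$ and $\mathfrak{a}:I$ is an $s$-residual intersection; note $\Kitt$ is independent of this choice and of $B$ by \cite[Section 4.2]{BH}. (2) Base case $s = n-2$: here Theorem 2.5's formula reads $\mathfrak{a}:I = \Fitt_0(I/\mathfrak{a}) + \sum_{\{a_{\nu_1},\dots,a_{\nu_{n-2}}\}} (a_{\nu_1},\dots,a_{\nu_{n-2}}):I$, but the only $(n-2)$-subset is $\{a_1,\dots,a_{n-2}\} = \mathfrak{a}$ itself when $s = n-2$, so this is $\Fitt_0(I/\mathfrak{a}) + \mathfrak{a}$; then identify this with $\Kitt(\mathfrak{a},I)$ using Theorem 4.3, since $\Gamma_{n-i}\tilde H_i$ for $i = 1$ lands in $\Fitt_0$ by Proposition 4.4 and for $i \geq 2$ the range $\max\{0,n-s\} = 2 \leq i \leq n-g$ is empty exactly when $s = n-2 \leq n - g$, i.e. $g \leq 2$; for $g \geq 3$ one must instead invoke that $I$ being weakly $(n-4)$-residually $S_2$ plus $G_{n-2}$ forces, via Theorem 2.5 and Remark 4.5, the containment $\mathfrak{a}:I \subseteq \Kitt(\mathfrak{a},I)$. (3) Inductive step on $s$: set $J_{s-1} = (a_1,\dots,a_{s-1}):I$, use \cite[Corollary 3.6.a, Proposition 3.3.b]{CEU} to get that $\bar a_s$ is a nonzerodivisor mod $J_{s-1}$, and establish $a_s(\mathfrak{a}':I) + J_{s-1} = x(\mathfrak{a}:I) + J_{s-1}$ together with the matching identity for $\Kitt$ — for the latter, use the presentation-matrix description of $\Kitt$ from Definition 4.1 and the cofactor computation in the proof of Theorem 2.5 verbatim, since $\Kitt$ is built from $\Gamma_\bullet \cdot Z_\bullet$ and the columns of $B$ versus $B'$ differ only in the last slot. (4) Conclude by the same "$J_{s-1}$ absorption" argument: $J_{s-1} \subseteq \mathfrak{a}:I$, $\Fitt_0(I/\mathfrak{a}_{s-1}) \subseteq \Fitt_0(I/\mathfrak{a}) \subseteq \Kitt(\mathfrak{a},I)$, and $J_{s-1} = \Kitt(\mathfrak{a}_{s-1},I)$ by induction, so $\Kitt(\mathfrak{a},I)$ contains $J_{s-1}$; cancel $x$.

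**The main obstacle.** The hard part is Step (3): showing that $\Kitt$ satisfies the \emph{same} Artin–Nagata-type propagation identity $a_s \Kitt(\mathfrak{a}',I) + J_{s-1} = x\,\Kitt(\mathfrak{a},I) + J_{s-1}$ that $\mathfrak{a}:I$ does. In Theorem 2.5 this came for free because $\mathfrak{a}:I$ is an honest colon ideal; for $\Kitt$ one has to go back to the Koszul-algebra construction, write $\Kitt(\mathfrak{a},I)\cdot e_1\wedge\cdots\wedge e_n = \langle \Gamma_\bullet Z_\bullet\rangle_n$ with $\Gamma_\bullet$ generated by $\zeta_1,\dots,\zeta_{s-1},\zeta_s$ versus $\zeta_1,\dots,\zeta_{s-1},\zeta_x$, and show that modulo $J_{s-1}$ (equivalently, after applying $\overline{(-)}$ and using that $\bar I \cong I/\mathfrak{a}_{s-1}$ has positive grade, by \cite[Corollary 3.4]{CEU}) the two differ by the scalar ratio $x/a_s$ on the relevant exterior component — this is essentially the linear-dependence argument "$\overline{x_n M_{i,j}} = \overline{x_i M_{n,j}}$" from the proof of Theorem 2.5, now applied at the level of the subalgebra $\Gamma_\bullet$ rather than just to $\Fitt_0$. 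Once that is in place, the bookkeeping is identical to Theorem 2.5, and since $\Fitt_0(I/\mathfrak{a}) \subseteq \Kitt(\mathfrak{a},I) \subseteq \mathfrak{a}:I$ always, equality of the outer two forces $\Kitt(\mathfrak{a},I) = \mathfrak{a}:I$.
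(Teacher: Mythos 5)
Your induction never gets off the ground: the base case is not established, and the way you try to extract it from Theorem 2.5 rests on a misreading. At $s = n-2$ the sum in Theorem 2.5 runs over the single subset $\{a_1,\dots,a_{n-2}\}$, whose contribution is $(a_1,\dots,a_{n-2}):I = \mathfrak{a}:I$ itself, not $\mathfrak{a}$; the formula is a tautology there and carries no information, so it cannot ``force'' $\mathfrak{a}:I \subseteq \Kitt(\mathfrak{a},I)$ as you assert for $g \geq 3$, and your identification of the right-hand side with $\Fitt_0(I/\mathfrak{a}) + \mathfrak{a}$ is false in general. (Also, the range $2 \leq i \leq n-g$ in Theorem 4.3 is empty exactly when $g \geq n-1$, not when $g \leq 2$.) Note moreover that Theorem 4.11 makes no assumption $s \geq n-2$, so the cases $s < n-2$ need their own argument, which Theorem 2.5 cannot supply. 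The second gap is the step you yourself call the main obstacle: the identity $a_s\Kitt(\mathfrak{a}',I)+J_{s-1} = x\,\Kitt(\mathfrak{a},I)+J_{s-1}$ is never proved, and it is not a routine extension of the cofactor computation in Theorem 2.5 --- that computation lives entirely in the degree-one cycles, i.e.\ the $\Fitt_0$ part $\langle\Gamma_\bullet\cdot Z_1\rangle_n$ of Proposition 4.4, whereas $\Kitt(\mathfrak{a},I)$ also involves $\Gamma_{n-i}\cdot \tilde H_i$ for $2 \leq i \leq n-g$, for which no analogous linear-dependence relation is offered. So the heart of the proposed proof is missing.

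For comparison, the paper's proof avoids both issues by a different mechanism: choosing generators so that each $J_i = (a_1,\dots,a_i):I$ is a geometric $i$-residual intersection with $I \cap J_i = \mathfrak{a}_i$, it shows that $\Kitt(\mathfrak{a},I)$ surjects onto $\Kitt(\overline{\mathfrak{a}},\overline{I})$ modulo each $J_i$ (using \cite[Lemma 1.4]{H} together with Theorem 4.3, and \cite[Theorem 4.27]{BH} to pass modulo the nonzerodivisors $a_i$). Modulo $J_{s-1}$ the ideal $\overline{\mathfrak{a}}$ is principal, where $\Kitt((\overline{a_s}),\overline{I}) = (\overline{a_s}):\overline{I} = \overline{\mathfrak{a}:I}$ by \cite[Remark 5.11]{BH} and \cite{CEU}, while the cases $s \leq g$ come from \cite[Proposition 5.10]{BH}; combined with $\Kitt(\mathfrak{a},I) \subseteq \mathfrak{a}:I$ and the containment $J_{s-1} \subseteq \Kitt(\mathfrak{a},I)$ (the one point your step (4) does capture, via induction and monotonicity of $\Kitt$), this yields the equality without ever proving a propagation identity for $\Kitt$ or invoking Theorem 2.5. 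If you want to keep your route, you must actually prove your identity for the full component $\langle\Gamma_\bullet\cdot Z_\bullet\rangle_n$ and supply an independent argument for all $s$ at or below $n-2$; as written, neither is in place.
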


\begin{proof}
As $I$ is $G_s$ we can select generators $a_1,\dots,a_s$ of $\mathfrak{a}$ such that for all $i<s$, $(a_1,\dots,a_i):I$ is a geometric $i$-residual intersection \cite[Corollary 1.6]{U} and $I \cap J_i = \mathfrak{a}_i$ \cite[Corollary 3.6]{CEU}. For all $i \leq s$, let $\mathfrak{a}_i = (a_1,\dots,a_i)$ and $J_i = \mathfrak{a}_i:I$. By \cite[Corollary 3.6, Lemma 2.4]{CEU}, $a_i$ is a non-zerodivsor in $R/J_{i-1}$ and, letting $\overline{\cdot}$ represent images in $R/J_{i-1}$, $\overline{J_i} = (\overline{a_i}):\overline{I}$.

Let $\overline{\cdot}$ represent images in $R/J_i$. We will first show, by induction on $i$ for $0 \leq i < s$ that $\Kitt(\mathfrak{a},I)$ surjects onto $\Kitt(\overline{\mathfrak{a}},\overline{I})$ for all $i$. Let $x_1,\dots,x_n$ be a generating set of $I$ and $H_i(x_1,\dots,x_n,R)$ represent the $i$th homology of $K_\bullet(x_1,\dots,x_i;R)$.

Suppose $i = 0$, then $I \cap J_i = 0$  and by \cite[Lemma 1.4]{H} $H_j(x_1,\dots,x_n,R)$ surjects onto $H_j(x_1,\dots,x_n,\overline{R})$ for all $j$. Combining this with Theorem 4.3 gives us that $\Kitt(\mathfrak{a},I)$ surjects onto $\Kitt(\overline{\mathfrak{a}},\overline{I})$.

Now, suppose $i>0$. Let $\cdot '$ represent images in $R/J_{i-1}$. By the inductive hypothesis $\Kitt(\mathfrak{a},I)$ surjects onto $\Kitt(\mathfrak{a}',I')$. Note that $a_i$ is a non-zerodivsor in $R'$ and let $\cdot ''$ be the image in $R/(a_i)+J_{i-1}$. By \cite[Theorem 4.27]{BH}, $\Kitt(\mathfrak{a}',I')$ surjects onto $\Kitt(\mathfrak{a}'',I'')$. Note that $J_i'' = 0:I''$. (Explain why this is the same as case $i = 0$), $\Kitt(\mathfrak{a}'',I'')$ surjects onto $\Kitt(\overline{\mathfrak{a}},\overline{I})$, thus we are done.

Now we prove that $\Kitt(\mathfrak{a},I) = \mathfrak{a}:I$. Let $\height(I)=g$. Note that if $s \leq g$, then by \cite[Proposition 5.10]{BH}, we are done.

So, assume $s \geq g+1$. Let $\overline{\cdot}$ represent images in $R/J_{s-1}$. Note that $\overline{\mathfrak{a}} = (\overline{a_s})$. Thus $\Kitt(\overline{\mathfrak{a}},\overline{I}) = \Kitt((\overline{a_s}),\overline{I})$. Since $a_s$ is a non-zerodivsor on $\overline{R}$, we have that $\grade((\overline{a_s}):\overline{I}) \geq 1$  and thus, by \cite[Remark 5.11]{BH}, $\Kitt((\overline{a_s}),\overline{I}) = (\overline{a_s}):\overline{I}$. Also note that $(\overline{a_s}):\overline{I} = \overline{\mathfrak{a}:I}$. So, putting this all together gives us that $\Kitt(\overline{\mathfrak{a}},\overline{I}) = \overline{\mathfrak{a}:I}$.

Since $\Kitt(\mathfrak{a},I)$ surjects onto $\Kitt(\overline{\mathfrak{a}},\overline{I})$, and by Remark 4.5, $\Kitt(\mathfrak{a},I) \subseteq \mathfrak{a}:I = \mathfrak{a}:I + J_{s-1}$, we conclude that $\Kitt(\mathfrak{a},I) = \mathfrak{a}:I$.

\end{proof}

\end{document}